\documentclass{article}

\usepackage{lipsum}
\usepackage{graphicx}
\usepackage{hyperref}

\usepackage{authblk}

\usepackage{tikz}
\usetikzlibrary{arrows,calc}

\tikzstyle{arc}=[->,shorten <=3pt, shorten >=3pt,
                 >=stealth, line width=1.1pt]
\tikzstyle{edge}=[shorten <=2pt, shorten >=2pt,
                  >=stealth, line width=1.1pt]
\tikzstyle{vertex}=[circle, fill=white, draw,
                    minimum size=5pt,
                    inner sep=0pt, outer sep=0pt]

\usepackage{float}

\usepackage{xcolor}

\usepackage{amsmath}
\usepackage{amssymb}
\usepackage{dsfont}
\usepackage{amsfonts}

\usepackage{caption}
\usepackage{subcaption}
\usepackage{multirow}
\usepackage{tabularx}


\usepackage{amsthm}

\usepackage[capitalize]{cleveref}

\newtheorem{theorem}{Theorem}
\newtheorem{lemma}[theorem]{Lemma}
\newtheorem{corollary}[theorem]{Corollary}
\newtheorem{proposition}[theorem]{Proposition}
\newtheorem{conjecture}[theorem]{Conjecture}

\title{Long-eared digraphs%
\thanks{The authors gratefully acknowledge support from grants CONACYT FORDECYT-PRONACES CF-2019/39570 and UNAM-DGAPA-PAPIIT IN110724, SEP-CONACYT A1-S-8397 and DGAPA-PAPIIT IA101423. The first author is supported by postdoctoral grant “Estancias Posdoctorales por México” by CONACYT (CVU: 622815)}}

\author[1]{Germ\'an Ben\'itez-Bobadilla\thanks{german@ciencias.unam.mx}}
\author[2]{Hortensia Galeana-S\'anchez \thanks{hgaleana@matem.unam.mx}}
\author[1]{C\'esar Hern\'andez-Cruz \thanks{chc@ciencias.unam.mx}}

\affil[1]{Departamento de Matem\'aticas, Facultad de Ciencias, Universidad Nacional Aut\'onoma de M\'exico}
\affil[2]{Instituto de Matem\'aticas, Universidad Nacional Aut\'onoma de M\'exico}

\begin{document}
\date{}

\maketitle
\begin{abstract}
Let $H$ be a subdigraph of a digraph $D$. An ear of $H$ in $D$ is a path or a cycle in $D$ whose ends lie in $H$ but whose internal vertices do not. An \emph{ear decomposition} of a strong digraph $D$ is a nested sequence $(D_0,D_1,\ldots , D_k)$ of strong subdigraphs of $D$ such that: 1) $D_0$ is a cycle, 2) $D_{i+1} = D_i\cup P_i$, where $P_i$ is an ear of $D_i$ in $D$, for every $i\in \{0,1,\ldots,k-1\}$, and 3) $D_k=D$.

In this work, the $\mathcal{LE}_i$ is defined as the family of strong digraphs, with an ear decomposition such that every ear has a length of at least $i\geq 1$. It is proved that Seymour's second Neighborhood Conjecture and the Laborde, Payan, and Soung conjecture, are true in the family $\mathcal{LE}_2$, and the Small quasi-kernel conjecture is true for digraphs in $\mathcal{LE}_3$. Also, some sufficient conditions for a strong nonseparable digraph in $\mathcal{LE}_2$ with a kernel to imply that the previous (following) subdigraph in the ear decomposition has a kernel too, are presented. It is proved that digraphs in $\mathcal{LE}_2$ have a chromatic number at most 3, and a dichromatic number 2 or 3. Finally, the oriented chromatic number of asymmetrical digraphs in $\mathcal{LE}_3$ is bounded by 6, and it is shown that the oriented chromatic number of asymmetrical digraphs in $\mathcal{LE}_2$ is not bounded.

\end{abstract}

\section{Introduction}
In this work, we will consider finite digraphs with neither multiple arcs nor loops. Throughout this work, $D$ denotes a digraph; $V(D)$ and $A(D)$ are the set of vertices and arcs, respectively.
 For general concepts, we refer the reader to \cite{bang2009,classesofdigraphs2018book, bondy2008}. An arc $(u,v)$ of $D$ is asymmetrical (symmetrical) if and only if $(v,u)\in A(D)$ ($(v,u)\notin A(D)$). A digraph $D$ is \emph{asymmetrical} (symmetrical) if and only if every arc of $D$ is asymmetrical (symmetrical).
Unless otherwise stated, we only consider directed walks, directed paths, and directed cycles. 
A $k$-cycle ($k$-path) is a cycle (path) with $k$ vertices.
 The asymmetrical directed cycle (path) with order $n$ will be denoted by $\overrightarrow{C}_n$ ($\overrightarrow{P}_n$). A sink is a vertex with an out-degree equal to zero.
 
 Let $D$ be a digraph. The underlying graph of $D$ is the graph $UG(D)$ with the same vertex set as $D$ and $xy\in E(UG(D))$ if and only if $(x,y)\in A(D)$ or $(y,x)\in A(D)$. We say that $D$ is a nonseparable (connected) digraph if and only if $UG(D)$ is a nonseparable (connected) graph.
 
 Let $S$ be a subset of vertices of a digraph $D$. We say that $S$ is an independent set of $D$ if and only if for any pair of different vertices of $S$ there is no arc between them, and $S$ is absorbent in $D$ if and only if for every vertex $u\in V(D)\setminus S$ there is a vertex $v\in S$ such that $(u,v)\in A(D)$. A subset $N$ of vertices of $D$ is a \emph{kernel} of $D$ if and only if it is both absorbent and independent.
In the context of game theory, von Neumann and Morgenstern introduced the kernels, in the study of winning strategies in two-person games \cite{neu}. However, kernels have been widely investigated both for their theoretical interest and for their applications. In particular, Chvátal proved that the problem of determining whether a digraph has a kernel or not is NP-complete \cite{Chv73}, furthermore Hell and Hernández-Cruz proved that the problem remains NP-complete even when the underlying graph is 3-colorable \cite{Hell2014DMGT}.

A subset $S$ of $V(D)$ is quasi-absorbent if and only if for every vertex $x\in V(D)\setminus S$ there is a path from $x$ to some vertex in $S$ with length at most 2. We say that $S$ is a quasi-kernel of $D$ if and only if $S$ is an independent quasi-absorbent set. Unlike kernels, Chvátal and Lovász proved that every digraph has a quasi-kernel \cite{chvatal1974every}; and its properties have been studied ever since \cite{AI2023113435, erdos2010two, JACOB1996279, Kostochka2022}. However, in \cite{CROITORU2015863} it is proved that the decision problem of determining whether there is a quasi-kernel containing a specified vertex in a given digraph is NP-complete.

One of the most studied topics is coloring in graphs and digraphs. Among them are proper colorings. A $k$-coloring of $D$ is an assignment of colors to the vertices of $D$; that is, it is a function $c: V(D)\rightarrow \{1, \ldots, k\}$. We say that a $k$-coloring is proper if and only if adjacent vertices have different colors. The \emph{chromatic number} of $D$, $\chi(D)$ is the minimum $k$ such that there is a proper $k$-coloring of $D$. Equivalently a proper $k$-coloring of $D$ is a partition of $V(D)$ into $k$ independent subsets, called chromatic classes.

Let $D$ and $H$ be two digraphs. A \emph{homomorphism} from a $D$ to $H$ is function $\phi: V(D)\rightarrow V(H)$ such that if $(u,v)\in A(D)$, then $(\phi(u),\phi(v) )\in A(H)$. Note that $k$-coloring can be equivalently regarded as a homomorphism of $D$ to the complete symmetrical digraph $K_k$ on $k$-vertices. Hence, the chromatic number of $D$ also can be defined as the smallest $k$ such that $D$ there is a homomorphism of $D$ to $K_k$ and there is no homomorphism of $D$ to $K_{k-1}$.

Let $D$ be an asymmetrical digraph. An \emph{oriented $k$-coloring} of $D$ is a proper coloring such that all the arcs linking two color classes have the same direction.
Note that any two vertices linked by a path of length 1 or 2, must be assigned distinct colors in any oriented coloring.
The \emph{oriented chromatic number} of $D$, $\chi_o(D)$ is the minimum $k$ such that there is an oriented $k$-coloring of $D$; or, equivalently, is defined as the minimum order of tournament $T$ such that there is a homomorphism of $D$ to $T$.

The oriented chromatic number was introduced by Courcelle in \cite{COURCELLE1994117}, and has been studied by different authors, in particular, 
in \cite{KLOSTERMEYER2004161}, Klostermeyer and MacGillivray proved that for a given asymmetrical digraph $D$, whether $\chi_o(G)\leq k$ can be decided in polynomial time if $k\leq 3$ and it is NP-complete if $k\geq 4$, even when $D$ is connected.
On the other hand, Culus and Demange proved that deciding whether $\chi_o(G)\leq 4$ is NP-complete even if $D$ is a bounded degree bipartite asymmetrical digraph, or a bounded degree acyclic asymmetrical digraph \cite{Culus2006}.
 Even more, Coelho, Faria, Gravier, and Klein proved that this problem is still NP-complete when restricted to acyclic asymmetrical digraphs whose underlying graph is connected, planar, bipartite, and has a maximum degree 3 \cite{COELHO2016109}.

We say that a digraph $D$ is strong if and only if for every $x,y\in V(D)$ there are a path from $x$ to $y$ and a path from $y$ to $x$. Note that, every strong digraph, other than $K_1$, contains a cycle. Let $H$ be a subdigraph of a strong digraph $D$. A directed ear of $H$ in $D$ is a path in $D$ whose ends lie in $H$ but whose internal vertices do not. In \cite{bondy2008}, it is proved that if $H$ is a nontrivial proper nonseparable strong subdigraph of
a nonseparable strong digraph $D$, then $H$ has a directed ear in $D$. Also, it is shown that if $H$ is a strong subdigraph of a digraph $D$, and $P$ is an ear of $H$ in $D$, then $H\cup P$ is strong.

An \emph{ear decomposition} of a nonseparable strong digraph $D$ is a nested sequence $(D_0,D_1,\ldots , D_k)$ of nonseparable strong subdigraphs of $D$ such that:
\begin{enumerate}
\item $D_0$ is a cycle,
 \item $D_{i+1} = D_i\cup P_i$, where $P_i$ is an ear of $D_i$ in $D$, for every $i\in \{0,1,\ldots,k-1\}$, and
 \item $D_k=D$.
\end{enumerate}

Also, it is shown in \cite{bondy2008} that every nonseparable strong digraph, other than $K_1$, has an ear decomposition. It is worth noting that if the ears also be cycles, where only the vertex at which the cycle starts and ends belongs to the strong subdigraph, are allowed, then we obtain a strong subdigraph. Hence, a strongly connected digraph has an ear decomposition, even if it is not a nonseparable digraph. Therefore, unless we indicate otherwise, we assume that ears can also be cycles.

Let $i$ be a positive integer. We define $\mathcal{LE}_i$ as the family of strong digraphs, with an ear decomposition such that every ear has a length of at least $i$. By definition of $\mathcal{LE}_i$, we have that $\mathcal{LE}_{i+1}\subset \mathcal{LE}_i$. 
 Observe that $\mathcal{LE}_1$ is the family of strong digraphs, and $\mathcal{LE}_2$ is the family of strong digraphs, with an ear decomposition such that every ear has a length of at least two.
Note that if $D\in \mathcal{LE}_i$, for some positive integer $i\geq 2$, then $D$ is not a minor-closed digraph. Thus $\mathcal{LE}_i$ cannot be characterized by a family of forbidden minors.
By definition of $\mathcal{LE}_i$ it is not difficult to note that, for every positive $i$, and every $D$ in $\mathcal{LE}_i$, there is no $k\in \mathds{N}$ such that $\Delta (D)\leq k$, nor the girth of $D$ is at most $k$, nor the circumference of $D$ is at most $k$. Also, if $D\in \mathcal{LE}_i$, then $D$ is not necessarily Eulerian. Now, consider $D$ the digraph in \ref{fig nonplanar}, and let $D_0$, $D_1$, $D_2$ and $D_3$ be the subdigraphs of $D$ induced by $\{x_1,y_1,x_2,y_2\}$, $\{x_1,y_1,x_2,y_2,x_3,y_3\}$, $\{x_1,y_1,x_2,y_2,x_3,y_3,w_1,w_2\}$ and $V(D)$ respectively. It follows that $(D_0, D_1, D_2, D_3)$ is an ear decomposition of $D$ such that every ear has a length of at least two. It follows that $D\in \mathcal{LE}_2$. On the other hand, $UG(D)$ is a subdivision of $K_{3,3}$, by Kuratowski's Theorem, $UG(D)$ is not planar. Hence, if $D\in \mathcal{LE}_2$, then $D$ is not necessarily planar. 
Similarly, for each $i\geq 3$, digraphs in $\mathcal{LE}_i$ such that they are not planar, can be constructed.
From the above, we can conclude that $\mathcal{LE}_i$ is not properly contained in common families of digraphs, such as planar, Eulerian, bounded girth, bounded degree, or bounded circumference, for every $i\geq 1$.

\begin{figure}
\begin{center}
\includegraphics[scale=.7]{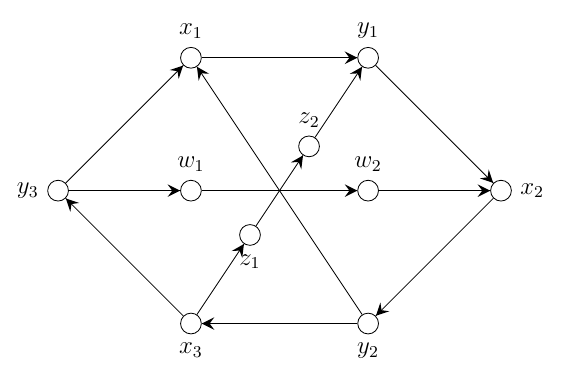}
\end{center}
\caption{Nonplanar digraph in $\mathcal{LE}_2$}
\label{fig nonplanar}
\end{figure}

The rest of the paper is organized as follows. \Cref{sec conjectures} is devoted to proving that two open problems in digraphs can be solved easily in the $\mathcal{LE}_2$ family, and a third problem can be easily solved in $\mathcal{LE}_3$ but for $\mathcal{LE}_2$ it remains an open problem. In \Cref{sec kernels}, some sufficient conditions for a strong nonseparable digraph in $\mathcal{LE}_2$ with a kernel, to imply that the previous subdigraph in the ear decomposition has a kernel, are presented. Also, some sufficient conditions for a strong nonseparable digraph in $\mathcal{LE}_2$, with a kernel to imply that the following digraph in an ear decomposition has a kernel too, are showed. In \Cref{sec classical colorings} it is proved that digraphs in $\mathcal{LE}_2$ have a chromatic number at most 3, and as a consequence, a digraph in $\mathcal{LE}_2$ has a dichromatic number 2 or 3. Finally, in \Cref{sec oriented coloring} the oriented chromatic number of asymmetrical digraphs in $\mathcal{LE}_3$ is bounded by 6. Furthermore, a family of digraphs with arbitrarily high oriented chromatic number where every ear has a length of 2 is presented, and we conclude that
the hypothesis that the ears have a length of at least 3 is tight.

\section{Some conjectures}\label{sec conjectures}

In this section, we will prove that some open problems in digraphs can be solved easily in the $\mathcal{LE}_i$ family, for some $i\geq 2$.

Let $D$ be a digraph, and let $v$ be a vertex of $D$.
We denote the second out-neighborhood of $v$ as 
$N^{++}(v)= N^+(N^+(v))= \underset{u\in N^+(v)}{\bigcup} N^+(u)\setminus N^+(v)$
 and $\delta^{++} (v) = |N^{++}(v)|$.
In 1990, Seymour proposed the following conjecture, which has been widely studied in different digraph families \cite{DAAMOUCH2020454, CGH2016, dean1995squaring, FY2007JGT, DAAMOUCH2021332}.

\begin{conjecture}\label{conj: SSNC}[Seymour's Second Neighborhood Conjecture \cite{dean1995squaring}]
If $D$ is an oriented graph, then there is a vertex $v$ such that $\delta^{++}D(v)\geq \delta^+ D(v)$, such $v$ is referred to as a Seymour vertex.
\end{conjecture}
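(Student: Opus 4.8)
The statement as worded is the full Seymour Second Neighborhood Conjecture, which is a long-standing open problem for arbitrary oriented graphs; hence no complete proof is known, and what follows is a plan for the most promising line of attack together with the obstruction that has so far blocked it. The natural starting point is the \emph{median order} technique of Havet and Thomass\'e, which resolves the conjecture in the special case of tournaments. Given a tournament $T$, a median order is a linear ordering $v_1,\ldots,v_n$ of $V(T)$ that maximizes the number of forward arcs; the associated local-exchange (``feedback'') lemmas show that the terminal vertex $v_n$ always satisfies $\delta^{++}(v_n)\ge\delta^+(v_n)$, so $v_n$ is a Seymour vertex. First I would attempt to transport this argument to an arbitrary oriented graph $D$ by completing $D$ to a tournament $T$---orienting each non-adjacent pair arbitrarily---taking a median order of $T$, and then tracking how the artificially added arcs perturb the first and second out-neighborhoods of the terminal vertex back in $D$.

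The hard part, and the reason the conjecture remains open, is that this perturbation cannot be controlled. In a tournament the inequality at $v_n$ rests on the fact that \emph{every} other vertex is comparable to $v_n$, so each one lands in exactly one of $N^+(v_n)$, $N^{++}(v_n)$, or an already-counted region; in a general oriented graph the non-adjacent vertices escape this trichotomy, and the arcs added during completion can either fabricate second out-neighbors that do not exist in $D$ or conceal genuine ones. Thus the central counting identity underlying the tournament proof collapses, and no known weighting or refined exchange argument repairs it uniformly over all oriented graphs.

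A complementary route I would pursue is the probabilistic and averaging strategy behind Fisher's proof for tournaments and the density-increment bounds of Chen, Shen and Yuster: assign weights to the vertices and try to show that the weighted average of $\delta^{++}(v)-\delta^+(v)$ is nonnegative, then extract an integral witness. The obstruction reappears in a new form, since the global degree identity available for tournaments (the out-degrees summing to $\binom{n}{2}$) has no counterpart when many pairs are non-adjacent, so the averaging produces no usable sign information. Given these barriers, the only realistic goal is to prove the conjecture on structured subfamilies in which a recursive decomposition supplies the control that global comparability provides for tournaments; this is exactly the approach taken in the sequel, where the length-at-least-two ears of an ear decomposition yield enough grip on out-neighborhoods to exhibit a Seymour vertex explicitly, establishing the conjecture for the family $\mathcal{LE}_2$.
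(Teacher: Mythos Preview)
You correctly recognize that the displayed statement is an open conjecture, and indeed the paper does not prove it: it is stated as \Cref{conj: SSNC} with no accompanying argument, and the paper's actual contribution is the theorem immediately following, which verifies the conjecture only for $D\in\mathcal{LE}_2$. Your survey of median orders and averaging techniques is accurate background but extraneous here, since the paper's restricted result is the one-line observation that the penultimate vertex $x_{r-1}$ of the final ear satisfies $\delta^+(x_{r-1})=1$ and $\delta^{++}(x_{r-1})\ge\delta^+(x_r)\ge1$; your closing paragraph anticipates precisely this.
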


Let $D$ be a digraph in $\mathcal{LE}_2$, with $(D_0, D_1,\ldots,D_k)$ an ear decomposition such that every ear has a length of at least 2. Consider $D_{k-1}$ and its ear $P=(x_0,x_1,\ldots,x_r)$ in $D$, with $r\geq 2$. Observe that $\delta^+(x_{r-1})=1$, and $\delta^+(x_r)\geq 1$. It follows that $x_{r-1}$ is a Seymour vertex in $D$. Therefore, we have the following result.

\begin{theorem}
The Seymour's Second Neighborhood Conjecture is true for every $D$ in $\mathcal{LE}_i$, for every $i\geq 2$.
\end{theorem}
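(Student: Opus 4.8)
The plan is to locate a vertex of out-degree exactly one and observe that such a vertex is automatically a Seymour vertex in any strong oriented graph. Since $\mathcal{LE}_i\subseteq\mathcal{LE}_2$ for every $i\geq 2$, it suffices to treat $D\in\mathcal{LE}_2$; fix an ear decomposition $(D_0,D_1,\ldots,D_k)$ of $D$ in which every ear has length at least $2$. The natural candidate is the penultimate vertex of the last ear: its only out-neighbour is an endpoint of that ear, which already lies in the strong subdigraph $D_{k-1}$ and therefore has positive out-degree, so the second out-neighbourhood of the candidate is as large as its (unit) out-degree.

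First I would dispose of the base case $k=0$: then $D=D_0$ is a cycle, every vertex $v$ has $\delta^{+}(v)=\delta^{++}(v)=1$, and any vertex is a Seymour vertex. Assuming $k\geq 1$, let $P=P_{k-1}=(x_0,x_1,\ldots,x_r)$ be the last ear of $D_{k-1}$ in $D$, with $r\geq 2$; then $x_{r-1}$ is an internal vertex of $P$, hence $x_{r-1}\notin V(D_{k-1})$. Next I would argue that the only arcs of $D$ incident with $x_{r-1}$ are $(x_{r-2},x_{r-1})$ and $(x_{r-1},x_r)$: indeed $D=D_{k-1}\cup P$, and no arc of $D_{k-1}$ can touch a vertex outside $V(D_{k-1})$. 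Consequently $N^{+}(x_{r-1})=\{x_r\}$ and $\delta^{+}(x_{r-1})=1$. Then I would compute $N^{++}(x_{r-1})=N^{+}(x_r)\setminus\{x_r\}=N^{+}(x_r)$ (there are no loops), and note that $x_r\in V(D_{k-1})$ with $D_{k-1}$ strong forces $\delta^{+}_D(x_r)\geq\delta^{+}_{D_{k-1}}(x_r)\geq 1$. Hence $\delta^{++}(x_{r-1})=\delta^{+}(x_r)\geq 1=\delta^{+}(x_{r-1})$, so $x_{r-1}$ is a Seymour vertex, and the same vertex works simultaneously for all $i\geq 2$.

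I do not expect a genuine obstacle here; the argument is essentially structural. The only points requiring a little care are the bookkeeping that $x_{r-1}$ carries no arcs beyond those of the ear $P$ (which is exactly the vertex-disjointness of the ear's interior from $D_{k-1}$), and the degenerate possibility that $P$ is a closed ear with $x_0=x_r$ — but this changes nothing, since $x_{r-1}$ is still an internal vertex whose unique out-neighbour $x_r$ lies in $D_{k-1}$. It is also worth remarking explicitly that this proof never uses anything about the digraph being an \emph{oriented} graph except to guarantee there are no loops, so the conclusion is robust.
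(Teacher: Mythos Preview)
Your proposal is correct and follows essentially the same approach as the paper: both identify the penultimate vertex $x_{r-1}$ of the last ear as a Seymour vertex, using that $\delta^{+}(x_{r-1})=1$ while $\delta^{+}(x_r)\geq 1$. Your write-up is in fact more careful than the paper's, since you explicitly handle the base case $k=0$, justify why $x_{r-1}$ carries no arcs beyond those of $P$, and address the closed-ear possibility $x_0=x_r$.
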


In 1983, Laborde, Payan, and Xuong proposed the following conjecture:

\begin{conjecture}[\cite{LabordePayanXuong82}]\label{conj: ind longest path} Every digraph has an independent set meeting every longest path.
\end{conjecture}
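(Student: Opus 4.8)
The plan is to separate the tractable acyclic core of \Cref{conj: ind longest path} from the genuinely hard cyclic part, prove the former outright, and then describe the route by which one would hope to reach the full statement. First I would dispose of the case in which $D$ is acyclic. For each vertex $v$ let $\ell(v)$ be the number of vertices of a longest path of $D$ ending at $v$. If $(u,v)\in A(D)$ then, since $D$ is acyclic, $v$ cannot already lie on a longest path ending at $u$, so appending $v$ gives $\ell(v)\ge \ell(u)+1$; hence $\ell$ increases strictly along every arc and each level set $\ell^{-1}(c)$ is independent. If $P=(v_1,\dots,v_L)$ is a longest path, then each initial segment $(v_1,\dots,v_i)$ is itself a longest path ending at $v_i$ (otherwise, by acyclicity, a longer path ending at $v_i$ would be disjoint from the tail $v_{i+1},\dots,v_L$ and could be spliced onto it to beat $L$), so $\ell(v_i)=i$ and $P$ meets each level set in exactly one vertex. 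Thus in the acyclic case every level set of $\ell$ is already an independent set meeting every longest path, and the conjecture holds with much to spare.

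Next I would try to transfer this to an arbitrary digraph $D$ through its strong components. Contracting each strong component to a single vertex produces the acyclic condensation $D^{*}$, to which the previous paragraph applies, and a longest path of $D$ projects to a path of $D^{*}$ whose vertices index the components it traverses. The strategy is to use the acyclic argument on $D^{*}$ as a scaffold: select inside each traversed component a small independent set absorbing the internal subpaths of the longest paths that cross that component, and then stitch these local choices together so that independence is preserved across component boundaries. A parallel extremal attack is worth preparing in case the direct construction resists: pass to a counterexample minimizing $|V(D)|$, locate a vertex or arc whose deletion leaves some longest path intact, and derive a contradiction from the inductive hypothesis.

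The hard part---and the reason \cite{LabordePayanXuong82} has stood as a conjecture rather than a theorem---is precisely the behavior inside a strong component. There $\ell$ is no longer monotone along arcs, so its level sets lose independence, and a single longest path may run through a component along a long internal subpath every vertex of which is pinned down by some \emph{other} longest path; reconciling independence with the demand to meet all longest paths at once, as they weave in and out of the strong components, is the crux of the difficulty. This is exactly the obstruction that a hypothesis forcing a controlled cyclic structure is meant to neutralize, and I would expect any unconditional resolution to require substantially new ideas for the strongly connected, densely cyclic regime rather than a refinement of the acyclic $\ell$-coloring described above.
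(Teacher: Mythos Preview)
The statement you were asked to prove is \Cref{conj: ind longest path}, which the paper presents as an \emph{open conjecture} due to Laborde, Payan and Xuong; the paper does not prove it. What the paper does prove is the theorem immediately following the conjecture: that the conjecture holds for every digraph $D\in\mathcal{LE}_i$ with $i\ge 2$, i.e.\ for strong digraphs admitting an ear decomposition in which every ear has length at least~$2$. That proof is a short induction on the number of ears: the base case is a directed cycle, and for the inductive step one takes the independent set $S$ guaranteed in $D_k$ and, according to whether the endpoints $x_0,x_r$ of the last ear $P_k$ lie in $S$ and whether $l(P_k)\ge 3$, either keeps $S$ or adjoins a single interior vertex of $P_k$.

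Your proposal does something disjoint from this. You give a correct and standard proof of the \emph{acyclic} case via the level function $\ell(v)$ (this is essentially the Gallai--Roy argument), and you then honestly flag that the strongly connected case is the genuine obstruction and that you do not know how to complete it. That assessment is accurate: the conjecture is open in general, and your condensation-and-stitch outline is a heuristic, not a proof. So your write-up is not wrong, but it does not prove the stated conjecture, and it does not address the special case the paper actually handles. Note in particular that the paper's result and your acyclic result cover \emph{different} regimes: every digraph in $\mathcal{LE}_2$ is strong (hence non-acyclic unless trivial), so neither argument subsumes the other. If the intent was to match the paper, you should instead attack the $\mathcal{LE}_2$ case directly by induction on the ear decomposition, which sidesteps exactly the ``densely cyclic'' difficulty you identified.
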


Like the previous one, \Cref{conj: ind longest path} has been extensively studied by different authors, who have found sufficient conditions for the conjecture to hold, for example in \cite{LabordePayanXuong82, BANGJENSEN1993267, GALEANASANCHEZ20082460, GALEANASANCHEZ1996141, Havet2004}.

\begin{theorem}
Let $D$ be a strong 
digraph. If $D\in \mathcal{LE}_i$,
 then $D$ has an independent set meeting every longest path of $D$, for every $i\geq 2$.
\end{theorem}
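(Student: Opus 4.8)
\emph{Plan of proof.} Since $\mathcal{LE}_i\subseteq\mathcal{LE}_2$ for every $i\ge 2$, it suffices to treat the case $D\in\mathcal{LE}_2$. I would fix an ear decomposition $(D_0,\dots ,D_k)$ of $D$ in which every ear has length at least $2$ and induct on $k$. The base case $k=0$ is immediate: $D=D_0=\overrightarrow{C}_n$, every path of $D$ has at most $n$ vertices, so every longest path is Hamiltonian and meets any prescribed vertex $v$; thus $\{v\}$ is the required independent set. For the inductive step write $P=(x_0,x_1,\dots ,x_r)$ for the last ear, so $r\ge 2$ and $V(D)\setminus V(D_{k-1})=\{x_1,\dots ,x_{r-1}\}$; the feature that drives everything is that, because no later ear can touch an internal vertex of the last ear, each $x_j$ ($1\le j\le r-1$) satisfies $N_D^-(x_j)=\{x_{j-1}\}$ and $N_D^+(x_j)=\{x_{j+1}\}$. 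Here $D_{k-1}\in\mathcal{LE}_2$ (with one fewer ear) and is strong, so the induction hypothesis applies to it; throughout, $\ell$ denotes the number of vertices on a longest path of the digraph under discussion.

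The first thing I would prove is a \emph{dichotomy lemma}: every longest path $Q$ of $D$ contains either none of $x_1,\dots ,x_{r-1}$, or all of them. If $Q$ meets these vertices, the in-/out-degree-$1$ conditions force the internal vertices lying on $Q$ to be a contiguous segment $x_a,\dots ,x_b$; if $a\ge 2$ then $x_{a-1}$ is internal and off $Q$, while $x_a$ must be the first vertex of $Q$ (its only in-neighbour $x_{a-1}$ is absent), so prepending $x_{a-1}$ produces a strictly longer path, a contradiction; hence $a=1$, and symmetrically $b=r-1$. Thus every longest path is of one of two types: \textbf{(a)} it lies inside $D_{k-1}$, or \textbf{(b)} it contains all of $x_1,\dots ,x_{r-1}$. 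Every type-(b) path contains $x_1$; moreover $D$ contains the path $x_0\to\cdots\to x_r$, so $\ell\ge r+1$ (and $\ell\ge r$ if $P$ is a cycle ear, $x_0=x_r$), whence a type-(b) path has at least two vertices in $V(D_{k-1})$, which form a prefix ending at $x_0$ and/or a suffix beginning at $x_r$. In particular every type-(b) path also contains $x_0$ or $x_r$ (in the cycle-ear case it contains the common vertex $x_0=x_r$, and that case is only easier).

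Now I would split according to whether a type-(a) longest path exists. If none does, every longest path is of type (b), hence contains $x_1$, and $S=\{x_1\}$ works. If some longest path is of type (a), then $\ell(D)=\ell(D_{k-1})$, the type-(a) longest paths of $D$ are exactly the longest paths of $D_{k-1}$, and the induction hypothesis supplies an independent set $S'$ of $D_{k-1}$ meeting all of them; it remains only to meet every type-(b) path. If $x_0\notin S'$ and, when $r=2$, also $x_r\notin S'$, then $S'\cup\{x_1\}$ is independent — the only $D$-neighbours of $x_1$ are $x_0$ and $x_2$, and for $r\ge 3$ the vertex $x_2$ is internal, hence not in $V(D_{k-1})\supseteq S'$ — and it meets type-(a) paths through $S'$ and type-(b) paths through $x_1$. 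If instead $x_0\in S'$, I claim $S'$ already suffices: a type-(b) path $Q$ that avoids $x_0$ would have $x_1$ as its first vertex (any predecessor of $x_1$ on $Q$ is $x_0$), so prepending $x_0$ to $Q$ gives a strictly longer path — impossible; hence $x_0\in V(Q)$ and $S'$ meets $Q$. The remaining sub-case ($r=2$, $x_0\notin S'$, $x_r\in S'$) is symmetric: a type-(b) path avoiding $x_2=x_r$ would have $x_1$ as its last vertex and could be extended by $x_2$, a contradiction, so $x_r\in V(Q)$ and $S'$ meets $Q$.

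These sub-cases are exhaustive, which completes the induction. The routine ingredients are the base case and the degree bookkeeping needed for the dichotomy lemma; the point that needs care — and where the obvious guess is insufficient — is the equal-length case, in which $S'\cup\{x_1\}$ may fail to be independent and one must instead use the path-extension argument to force any longest path through the last ear to also pass through one of its attachment vertices $x_0,x_r$, both of which live in $D_{k-1}$ and are therefore already handled by the inductive set.
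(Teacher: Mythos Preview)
Your proof is correct and follows essentially the same approach as the paper's: induction on the number of ears, using that the internal vertices of the last ear have in- and out-degree one in $D$ to force any longest path either to avoid the ear entirely or to traverse all its internal vertices, together with a path-extension argument when an endpoint of the ear lies in the inductive independent set. Your casework is in fact slightly cleaner than the paper's---by applying the extension argument whenever $x_0\in S'$ (for every $r\ge 2$, not just $r=2$) you avoid the paper's extra sub-case of adjoining $x_2$ when $r\ge 3$---but the underlying idea is identical.
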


\begin{proof}
Let $D$ be a strong digraph with an ear decomposition $(D_0,D_1,\ldots,D_k)$ such that every ear of $D_i$ has a length of at least two. 
We will proceed by induction over $k$. 
If $k=0$, then $D$ is a cycle. It follows that every longest path of $D$ is a Hamiltonian path, moreover every independent set of $D$ meets every longest path of $D$. Suppose that if $D'$ is a strong digraph with an ear decomposition $(D'_0,D'_1,\ldots,D'_k)$ such that every ear of $D'_i$ has a length of at least two, then $D'$ has an independent set meeting every longest path of $D'$.
Suppose that $D$ is a strong digraph with an ear decomposition $(D_0,D_1,\ldots,D_k,D_{k+1})$ such that every ear of $D_i$ has a length of at least two. Let $P_{k}=(x_0,x_1,\ldots,x_r)$ be the ear of $D_k$ in $D$.
 On the other hand, note that $D'=D_k$ is a strong digraph with an ear decomposition $(D_0,D_1,\ldots,D_k,)$ such that every ear of $D_i$ has a length of at least two, by induction hypothesis, $D'$ has an independent set $S$ meeting every longest path of $D'$.
  If there is no longest path in $D$, with some arc in $P_k$, then $S$ is also an independent set meeting every longest path of $D$.
  Assume that, at least, there is a longest path of $D$ with arcs in $P_k$.
Let $P$ be a longest path in $D$. If $P$ is also a longest path in $D'$, then $S$ meets $P$. Otherwise, $P$ has arcs of $P_k$, furthermore, by definition of $P$, at least one $(x_0,x_1,x_2,\ldots, x_{r-1})$ or $(x_1,x_2,\ldots, x_r)$ is a subpath of $P$. If $x_0$ and $x_r$ belong to $S$, then $S$ intersects $P$; even mover, if $x_0,x_r\notin S$, then $S\cup \{x_1\}$. intersects $P$. We consider the following cases.
  
  \emph{Case 1}. $x_0\in S$, $x_r\notin S$, and $l(P_k)\geq 3$. It follows that $S\cup \{x_2\}$ meets $P$.
  
    \emph{Case 2}. $x_0\in S$, $x_r\notin S$, and $l(P_k)=2$. If $x_0\notin V(P)$, then $x_1$ is the first vertex in $P$ and $(x_1,x_2=x_r)$ is a subpath of $P$, moreover, $(x_0,x_1)\cup P$ is a path with length greater than $P$ in $D$, which is impossible. Hence, $S$ meets $P$. 
    
    \emph{Case 3}. $x_0\notin S$, $x_r\in S$, and $l(P_k)\geq 3$. It follows that $S\cup \{x_1\}$ meets $P$.
  
    \emph{Case 4}. $x_0\notin S$, $x_r\in S$, and $l(P_k)=2$. If $x_r\notin V(P)$, then $x_1$ is the last vertex in $P$ and $(x_0,x_1)$ is a subpath of $P$, moreover, $P\cup (x_1,x_2)$ is a path with a length greater than $P$ in $D$, which is impossible. Hence, $S$ meets $P$.
   
Therefore, there is an independent set meeting every longest path in $D$.
\end{proof}

Let $D$ be a digraph. A subset of vertices $Q$ is \emph{quasi-absorbent} if and only if for every $x\notin Q$ there is a vertex $y\in Q$ such that $(x,y)\in A(D)$ or there are $w\in V(D)$ and $y\in Q$ such that $(x,w,y)$ is a path in $D$.
A \emph{quasi-kernel} of $D$ is an independent and quasi-absorbent set of $D$. A quasi-kernel $Q$ of $D$ is small if $|Q|\leq \frac{|V(D)|}{2}$. In \cite{chvatal74quasi}, Chv\'atal proved that every digraph has a quasi-kernel. However, the problem of deciding the existence of a quasi-kernel containing a specific vertex is an NP-complete problem \cite{CROITORU2015863}.
On the other hand, in 1976, Erd\H{o}s and Székely conjectured that every  digraph $D$ without sinks has a small quasi-kernel. Also, this conjecture has been studied by several authors \cite{AI2023113435, Gutin200448, HJ2008}. 

\begin{conjecture}\label{conj: small quasi}[Small quasi-kernel \cite{erdos2010two}]
Every digraph $D$ without a sink has a small quasi-kernel.
\end{conjecture}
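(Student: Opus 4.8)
The plan is to establish the full statement: every digraph $D$ with no sink has a quasi-kernel of size at most $|V(D)|/2$. I should say at the outset that, as worded, this is exactly the Erdős–Székely small quasi-kernel conjecture, which is open in complete generality, so what follows is an attack strategy rather than a guaranteed argument. The natural starting point is the Chvátal–Lovász theorem quoted above, which already hands us \emph{some} quasi-kernel $Q$; the entire difficulty is that the guaranteed $Q$ may be large, whereas the statement demands one of size at most half of $V(D)$. So the target reduces to a size-control problem: produce an independent quasi-absorbent set in which each chosen vertex "pays for" at least one additional non-chosen vertex on average.

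The method I would pursue is a structural induction along an ear decomposition. For a strong digraph $D$ with decomposition $(D_0, D_1, \ldots, D_k)$, I would build the quasi-kernel incrementally, maintaining at each stage $D_i$ an independent quasi-absorbent set $Q_i$ together with the invariant $|Q_i| \leq |V(D_i)|/2$. The base case $D_0$ is a directed cycle $\overrightarrow{C}_n$, where taking roughly every third vertex gives an independent quasi-absorbent set of size $\lceil n/3 \rceil \leq n/2$, so the invariant holds at the start. At the inductive step $D_{i+1} = D_i \cup P_i$ with ear $P_i = (x_0, \ldots, x_r)$, the only new vertices are the internal ones $x_1, \ldots, x_{r-1}$; these must be quasi-absorbed, and the goal is to add or swap in at most $\lceil (r-1)/3 \rceil$ of them while preserving independence against the endpoints $x_0, x_r$, which already live in $D_i$ and may or may not lie in $Q_i$.

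The quantitative heart of why long ears cooperate is a $1 : 2$ accounting argument: an ear of length at least $3$ contributes at least two new internal vertices, so each quasi-kernel vertex one is forced to introduce absorbs enough new vertices to preserve the $|V|/2$ budget. This is precisely the mechanism that closes the induction for $\mathcal{LE}_3$ and, via $\mathcal{LE}_{i+1} \subset \mathcal{LE}_i$, for every $\mathcal{LE}_i$ with $i \geq 3$.

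The main obstacle — and the reason the conjecture remains open in full generality — is twofold, and this is where the proposal and the conjecture part company. First, an arbitrary sink-free digraph need not be strong, so the ear-decomposition toolkit does not apply directly; one would first have to reduce to strong components and separately control the budget contributed by acyclic "feeder" structure, which the ear machinery does not see. Second, and more seriously, the accounting collapses in the short-ear case: ears of length $1$ or $2$ add at most one new internal vertex, so a single forced quasi-kernel vertex can fail to pay for a second vertex, and the $|V(D)|/2$ bound can be violated. No ear-decomposition bookkeeping of this local type controls the size once short ears are permitted, so the honest scope of this approach is $\mathcal{LE}_3$; settling the conjecture for all sink-free digraphs would require a genuinely global idea that removes the dependence on ear length.
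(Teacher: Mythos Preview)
The statement you were asked to prove is a \emph{conjecture}, and the paper does not prove it either; it only establishes the special case for $\mathcal{LE}_i$ with $i\geq 3$ (the paper's \Cref{thm quasi LE3}) and then explicitly remarks, with a concrete example, that the ear-by-ear augmentation technique breaks down when ears of length $2$ are allowed, leaving the conjecture open for $\mathcal{LE}_1$ and $\mathcal{LE}_2$. You recognized exactly this situation: your write-up is not a proof of the conjecture but an honest scoping of what the ear-decomposition approach can reach.

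On the partial result itself, your strategy is essentially the paper's. Both argue by induction on the number of ears, start from a small quasi-kernel of a cycle, and at the inductive step insert roughly every third internal vertex of the new ear $P_k=(x_0,\dots,x_r)$ into the quasi-kernel, with the choice of which residue class to use depending on whether $x_0$ and $x_r$ already lie in $Q$ and on $r\bmod 3$. The paper spells this out with an explicit twelve-case table; your ``$1:2$ accounting'' phrasing is the same idea in words. Both analyses also identify the same obstruction for short ears: an ear of length $2$ contributes only one internal vertex, so adding it to the quasi-kernel may tip the balance past $|V(D)|/2$. The paper makes this concrete with a small digraph in $\mathcal{LE}_2$ where every attempt to extend a small quasi-kernel of $D_{k-1}$ along a length-$2$ ear either fails independence, fails quasi-absorbency, or exceeds half the vertex count.

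In short: there is no gap in your reasoning relative to the paper, because neither you nor the paper claims a proof of the full conjecture; and for the $\mathcal{LE}_3$ range where a proof exists, your outlined approach coincides with the paper's.
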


Recently, Kostochka, Luo, and Shan proved that the conjecture is true when the digraph is an orientation of a 4-colorable graph \cite{Kostochka2022}, and van Hulst proved that digraphs with kernels satisfy the conjecture \cite{van2021kernels}.

\begin{theorem}\label{thm quasi LE3}
Let $D$ be a strong 
 digraph. If $D \in \mathcal{LE}_i$,
 then $D$ has a small quasi-kernel, for every $i\geq 3$.
\end{theorem}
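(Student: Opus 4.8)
The plan is an induction on the number $k$ of ears of an ear decomposition $(D_0,D_1,\dots,D_k)$ of $D$ in which every ear (including $D_0$) has length at least $3$, carrying the strengthened hypothesis that $D_j$ has a quasi-kernel $Q_j$ with $|Q_j|\le |V(D_j)|/2$. For the base case $D_0$ is a directed cycle on $n\ge 3$ vertices; walking around the cycle and taking every second vertex (shortening the last step when $n$ is odd) yields an independent set that absorbs, hence quasi-absorbs, every other vertex, of size $\lfloor n/2\rfloor\le n/2$.

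For the inductive step write $D_{k+1}=D_k\cup P$ with $P=(x_0,x_1,\dots,x_r)$ an ear of $D_k$ in $D$, $r\ge 3$, so that $x_1,\dots,x_{r-1}$ are the new vertices and $|V(D_{k+1})|=|V(D_k)|+(r-1)$. Let $Q'=Q_k$ be the quasi-kernel given by the hypothesis. The only arcs added in passing from $D_k$ to $D_{k+1}$ are the arcs of $P$, each having at least one end among $x_1,\dots,x_{r-1}$, so no arc with both ends in $V(D_k)$ is created; hence $Q'$ is still independent in $D_{k+1}$ and still quasi-absorbs $V(D_k)\setminus Q'$ (every witnessing arc and length-$2$ path survives). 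It therefore suffices to choose $S\subseteq\{x_1,\dots,x_{r-1}\}$ that is independent, has no arc to or from $Q'$, quasi-absorbs every internal vertex not in $S$ into $Q'\cup S$, and has $|S|\le (r-1)/2$; then $Q_{k+1}:=Q'\cup S$ is the desired quasi-kernel.

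To build $S$, observe that $x_1,\dots,x_{r-1}$ induce the directed path $x_1\to\cdots\to x_{r-1}$, and the only arcs between this path and $V(D_k)$ are $(x_0,x_1)$ and $(x_{r-1},x_r)$; in particular $x_r$ is the only vertex of $Q'$ that $x_{r-1}$ can be adjacent to. I split on whether $x_r\in Q'$. If $x_r\notin Q'$, I put $x_{r-1}$ into $S$ together with every second vertex going backwards along the path, truncated at $x_2$ or $x_3$ so that $x_1\notin S$; then $x_{r-2}$ is absorbed by the arc into $x_{r-1}$, each earlier internal vertex not in $S$ by a length-$2$ path to the next element of $S$, and $x_1$ by the arc into $x_2$ or the path $x_1\to x_2\to x_3$. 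If $x_r\in Q'$, then $x_{r-1}$ must stay out of $S$, but it is absorbed by the arc $x_{r-1}\to x_r$ and $x_{r-2}$ by $x_{r-2}\to x_{r-1}\to x_r$; for $r=3$ one may take $S=\emptyset$, and for $r\ge 4$ I take $S$ to be every second vertex of $x_2,\dots,x_{r-2}$ including $x_{r-2}$ (so $x_1\notin S$), which absorbs the remaining internal vertices as before. In both cases $S$ is independent; $x_1\notin S$ and, in the first case, $x_r\notin Q'$ ensure $S$ has no neighbour in $Q'$ (this also covers a closed ear, where $x_0=x_r$, since then $x_0\in Q'\iff x_r\in Q'$); and the "every second vertex" choice gives $|S|\le\lfloor(r-1)/2\rfloor\le(r-1)/2$, completing the induction.

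I expect the only delicate point to be this endpoint bookkeeping: a candidate for $S$ adjacent to $Q'$ (namely $x_1$ when $x_0\in Q'$, or $x_{r-1}$ when $x_r\in Q'$) must be excluded, and one needs enough internal vertices to absorb it through a length-$2$ path instead — exactly what ears of length at least $3$ provide. For a length-$2$ ear the single internal vertex $x_1$ can simultaneously fail to be quasi-absorbed by $Q'$ and be barred from $S$ because $x_0\in Q'$, so the induction genuinely breaks, which is why the $\mathcal{LE}_2$ case is left open. The size estimate is immediate, and checking that $Q_{k+1}$ is quasi-absorbent is a routine case-by-case verification over the internal vertices.
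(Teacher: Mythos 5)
Your proof is correct and follows essentially the same route as the paper: induction on the ear decomposition, keeping the small quasi-kernel of $D_k$ and adding a regularly spaced independent set of internal vertices of the new ear, with the case split governed by whether the ear's endpoints lie in the current quasi-kernel. The only difference is that the paper spaces the added vertices three apart (organized in a twelve-case table according to the residue of $r$ modulo $3$), while you space them two apart; both choices satisfy the required bound $|S|\le (r-1)/2$.
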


\begin{proof}
Let $D$ be a strong digraph with an ear decomposition $(D_0, D_1, \ldots, D_k)$ such that every ear of $D_i$ has a length of at least three. 
We will proceed by induction over $k$. 
If $k=0$, then $D$ is a cycle. It follows that $D$ has a small quasi-kernel.
Suppose that if $D'$ is a strong digraph with an ear decomposition $(D'_0, D'_1,\ldots, D'_k)$ such that every ear of $D'_i$ has a length of at least three, then $D'$ has a small quasi-kernel.
Suppose that $D$ is a strong digraph with an ear decomposition $(D_0, D_1,\ldots, D_k, D_{k+1})$ such that every ear of $D_i$ has a length of at least three. Let $P_{k}=(x_0,x_1,\ldots,x_r)$ be the ear of $D_k$ in $D$.

Observe that $D'=D_k$ is a strong digraph with an ear decomposition $(D_0, D_1,\ldots, D_k,)$ such that every ear of $D_i$ has a length of at least three, by the induction hypothesis, $D'$ has a small quasi-kernel $Q$. Observe that $Q$ is an independent set in $D$ and a quasi-absorbent set in $D'$. 
 We have cases, depending on whether the extreme vertices of $P_k$ belong to $Q$ or not.
We construct the set $Q'$, following \Cref{tabla quasi}.

\begin{table}[!ht]
    \centering
    \begin{tabular}{|c|c|c|c|}
    \hline
\multicolumn{2}{|c|}{Cases} & length & $Q'$\\ \hline
      \multirow{3}{4em}{$x_0\in Q$}   &  \multirow{3}{4em}{$x_r\in Q$} & $r\equiv 0$ (mod 3) & $Q\cup \{ x_3,x_6,\ldots,x_{r-3}\}$  \\ \cline{3-4}
        &  & $r\equiv 1$ (mod 3) & $Q\cup \{x_2\}\cup \{ x_4,x_7,\ldots,x_{r-3}\}$  \\ \cline{3-4}
         &  & $r\equiv 2$ (mod 3) & $Q\cup \{x_2, x_5,\ldots,x_{r-3}\}$  \\ \hline
      \multirow{3}{4em}{$x_0\notin Q$}  & \multirow{3}{4em}{$x_r\in Q$}  & $r\equiv 0$ (mod 3) &$Q\cup \{ x_3,x_6,\ldots,x_{r-3}\}$ \\ \cline{3-4}
         &  & $r\equiv 1$ (mod 3) & $Q\cup \{x_1, x_4,\ldots,x_{r-3}\}$  \\ \cline{3-4}
         &  & $r\equiv 2$ (mod 3) & $Q\cup \{x_2, x_5,\ldots,x_{r-3}\}$  \\ \hline  
      \multirow{3}{4em}{$x_0\in Q$} & \multirow{3}{4em}{$x_r\notin Q$} & $r\equiv 0$ (mod 3) & $Q\cup \{ x_1,x_4,\ldots,x_{r-1}\}$  \\ \cline{3-4}
         &  & $r\equiv 1$ (mod 3) & $Q\cup \{x_3, x_6,\ldots,x_{r-1}\}$  \\ \cline{3-4}
           &  & $r\equiv 2$ (mod 3) & $Q\cup \{x_2\}\cup \{ x_4, x_7,\ldots,x_{r-1}\}$  \\ \hline      
           \multirow{3}{4em}{$x_0\notin Q$} & \multirow{3}{4em}{$x_r\notin Q$} & $r\equiv 0$ (mod 3) & $Q\cup \{ x_2,x_5,\ldots,x_{r-1}\}$  \\ \cline{3-4}
         &  & $r\equiv 1$ (mod 3) & $Q\cup \{x_3, x_6,\ldots,x_{r-1}\}$  \\ \cline{3-4}
           &  & $r\equiv 2$ (mod 3) & $Q\cup \{ x_1, x_4,\ldots,x_{r-1}\}$  \\ \hline
    \end{tabular}
    \caption{$Q'$ depending on the case and the length of $P_k$}
    \label{tabla quasi}
\end{table} 

It is straightforward that $Q'$ is a quasi-kernel of $D$. Furthermore, for every vertex of $P_k$ that we add to $Q'$, there is at least one vertex of $P_k$ that is not in $Q'$. Therefore, $Q'$ is a small quasi-kernel of $D$.

\end{proof}

\begin{figure}
\begin{center}
\includegraphics[scale=1]{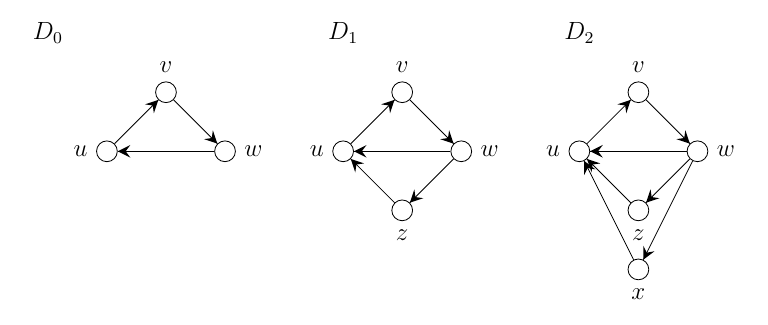}
\end{center}
\caption{Digraph in $\mathcal{LE}_2$}
\label{fig quasi2}
\end{figure}

Now, consider the digraphs in \Cref{fig quasi2}. Note that $Q_1=\{ w\}$ is a small quasi-kernel of $D_0$, however when we consider the ear $P_0=\{w,z,u\}$ of $D_0$, it follows that no vertex of $P_0$ can be added to $Q_1$, such that the resulting set is a quasi-kernel of $D_1$. 
On the other hand, note that $Q_2=\{ v\}$ is a small quasi-kernel of $D_0$, following the idea of the proof of \Cref{thm quasi LE3}, we add $z$ to $Q_2$, obtaining $Q_3=\{v,z\}$ which is a small quasi-kernel of $D_1$. Now, consider the ear $P_1=\{ w,x,u\}$ of $D_1$, again following the idea of the proof of \Cref{thm quasi LE3}, we add $x$ to $Q_3$, obtaining $Q_4=\{v,z,x\}$ which is a quasi-kernel of $D_2$ but is not a small quasi-kernel.
Therefore, when there are ears of length 2, the technique of adding vertices of the last ear to the quasi-kernel obtained in the previous strong subdigraph is not sufficient to guarantee the existence of a small quasi-kernel of the resulting digraph. Therefore, proving that the small quasi-kernel conjecture holds for the families $\mathcal{LE}_1$ and $\mathcal{LE}_2$ remains open.
Similarly, we consider that some other open problems can be solved, at least partially, for some of the $\mathcal{LE}_i$ families.

\section{Kernels in digraphs with long ears}\label{sec kernels}

In this section, some sufficient conditions for a strong nonseparable digraph in $\mathcal{LE}_2$ with a kernel, to imply that the previous subdigraph in the ear decomposition has a kernel, are presented. In addition, some sufficient conditions for a strong nonseparable digraph in $\mathcal{LE}_2$, with a kernel to imply that the following digraph in an ear decomposition has a kernel too, are shown.

\begin{lemma}\label{thm: H' kernel then H kernel}
Let $D$ be a strong nonseparable digraph, let $H$ be a strong nonseparable digraph, and let $P=(x_0,x_1,\ldots,x_{r-1},x_r)$ be an ear of $H$ in $D$, with $l(P)\geq 2$. If $H'=H\cup P$ has a kernel $N'$, and one of the following assertions holds:

\begin{enumerate}
\item $x_0,x_r\in N'$.
\item $x_0\in N'$ and $x_r\notin N'$.
\item $x_0\notin N'$, $x_r\in N'$ and $l(P)$ is even.
\item $x_0,x_r\notin N'$ and $l(P)$ is odd.
\end{enumerate}
 Then $H$ has a kernel.
\end{lemma}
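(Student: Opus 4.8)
The plan is to show that in each of the four cases the set $N:=N'\cap V(H)$ is already a kernel of $H$; the hypotheses of the four cases will be used only to rule out a single possible failure of absorbency. First I would record two easy facts. Independence of $N$ in $H$ is free: an arc of $H$ between two vertices of $N$ would be an arc of $H'$ between two vertices of $N'$. For absorbency, the key structural observation is that $x_0$ is the only vertex of $H$ whose out-neighbourhood grows when we pass from $H$ to $H'$: the arcs of $P$ are $(x_0,x_1),(x_1,x_2),\dots,(x_{r-1},x_r)$, and the tails $x_1,\dots,x_{r-1}$ are internal vertices of the ear, hence not in $V(H)$. Consequently, if $u\in V(H)\setminus N$ and $u\neq x_0$, then every out-neighbour of $u$ in $H'$ lies in $V(H)$, so the vertex of $N'$ absorbing $u$ in $H'$ lies in $N'\cap V(H)=N$, and $u$ is absorbed by $N$ already in $H$. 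Thus $N$ is a kernel of $H$ unless $x_0\notin N$ and $x_0$ has no out-neighbour in $N$; and in that situation, since $x_1$ is the only out-neighbour of $x_0$ outside $V(H)$ and $N'$ absorbs $x_0$ in $H'$, we must have $x_0\notin N'$ and $x_1\in N'$.

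In Cases~1 and~2 we have $x_0\in N'$, hence $x_0\in N$, so the bad situation above cannot arise and $N$ is a kernel of $H$. In Cases~3 and~4 we have $x_0\notin N'$, and I would argue by contradiction: assume $N$ fails to absorb $x_0$, so $x_1\in N'$ by the previous paragraph, and then propagate this fact along the ear. Since $(x_1,x_2)\in A(H')$ and $N'$ is independent, $x_2\notin N'$; if $x_2$ is an internal vertex of $P$ then its unique out-neighbour in $H'$ is $x_3$ (it has no out-arc in $H$), so absorbency of $N'$ forces $x_3\in N'$; then $x_4\notin N'$, and so on. A straightforward induction gives $x_j\in N'$ for every odd $j$ with $1\le j\le r$ and $x_j\notin N'$ for every even $j$ with $1\le j\le r$. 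Setting $j=r$: if $l(P)=r$ is odd this forces $x_r\in N'$, contradicting the hypothesis of Case~4; if $l(P)=r$ is even it forces $x_r\notin N'$, contradicting the hypothesis of Case~3. In either case we reach a contradiction, so $N$ does absorb $x_0$, and therefore $N$ is a kernel of $H$.

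The only delicate point is the propagation step of the second paragraph: it relies on the fact that each internal vertex of $P$ has out-degree exactly one in $H'$, so that a non-membership in $N'$ is forced to "move forward" by exactly one along the ear; once this is in hand, the parity bookkeeping pins down whether $x_r\in N'$ and collides with the case hypothesis. (If $P$ is a cycle ear, so $x_0=x_r$, only Cases~1 and~4 are non-vacuous and the same argument applies verbatim, the final contradiction in Case~4 being $x_0\in N'$.) Note that the hypotheses that $D$ and $H$ are strong and nonseparable are not needed in the argument itself; they serve only to make the surrounding ear-decomposition framework meaningful.
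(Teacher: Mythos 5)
Your proposal is correct and follows essentially the same route as the paper: take $N=N'\cap V(H)$, observe that $x_0$ is the only vertex of $H$ whose out-neighbourhood changes in $H'$, and use the forced alternation of $N'$ along the internal vertices of the ear (each having out-degree one in $H'$) to settle Cases 3 and 4 by parity. The only cosmetic difference is that you propagate the alternation forward from $x_1$ by contradiction, whereas the paper reads it off backward from $x_r$; the parity bookkeeping is identical.
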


\begin{proof}
Let $D$, $H$, $H'$ and $N'$ as in the hypothesis. By definition, $H$ is an induced subdigraph of $H'$. We will prove that $N=N'\cap V(H)$ is a kernel of $H$. Since $N'$ is independent in $H'$, it follows that $N$ is independent in $H$.
Let $y$ be a vertex in $V(H)\setminus N$. It follows that there is $w\in V(H')$ such that $(y,w)\in A(H')$. If $w\in V(H)$, then $(y,w)\in A(H)$. 
Since $P$ is an ear of $H$ with a length of at least $2$, the only vertex in $H$ whose out-neighborhood intersects $V(P)\setminus V(H)$ is $x_0$. 
 Thus, if $y\neq x_0$, then $w\in V(H)$.
 So, if assertion 1 or 2 is true, then $y \neq x_0$, in this way, assume that $y=x_0$, and assertion 3 or 4 holds.

\emph{Case 1}. $x_0\notin N'$, $x_r\in N'$ and $l(P)$ is even. Since $P$ is a path in $H'$, with even length and $x_r\in N'$, we have that $x_i\notin N'$ for every $i\in \{ 1,3,\ldots,r-1\}$, it implies that $w\neq x_1$. Hence $w\in V(H)$.

\emph{Case 2}. $x_0,x_r\notin N'$ and $l(P)$ is odd. Since $P$ is a path in $H'$, with odd length and $x_r\notin N'$, we have that $x_i\notin N'$ for every $i\in \{ 1,3,\ldots,r-2\}$, it implies that $w\neq x_1$. Hence, $w\in V(H)$.

Therefore, $N$ is an absorbent set in $H$, and a kernel of $H$.
\end{proof}

\begin{corollary}\label{cor: H no kernel and H' kernel then options}
Let $D$ be a strong nonseparable digraph, let $H$ be a strong nonseparable digraph, and let $P=(x_0,x_1,\ldots,x_{r-1},x_r)$ be an ear of $H$ in $D$, with $l(P)\geq 2$. If $H$ has no kernel and $H'=H\cup P$ has a kernel $N'$, then one of the following assertions holds:

\begin{enumerate}
\item $x_0\notin N'$, $x_r\in N'$ and $l(P)$ is odd.
\item $x_0,x_r\notin N'$ and $l(P)$ is even.
\end{enumerate} 
\end{corollary}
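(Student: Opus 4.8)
The plan is to obtain this corollary as the contrapositive of \Cref{thm: H' kernel then H kernel}, organized as a short case analysis on where the endpoints $x_0$ and $x_r$ of the ear sit with respect to the kernel $N'$ of $H'$.

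First I would record that, since $x_0$ and $x_r$ are vertices of $H'$, exactly one of four mutually exclusive situations occurs: (a) $x_0,x_r\in N'$; (b) $x_0\in N'$ and $x_r\notin N'$; (c) $x_0\notin N'$ and $x_r\in N'$; (d) $x_0,x_r\notin N'$. If the ear $P$ happens to be a cycle, then $x_0=x_r$ and only (a) and (d) can occur; this does not affect the argument, but I would mention it so that the case split is visibly exhaustive.

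Next I would invoke \Cref{thm: H' kernel then H kernel} to discard the situations that are incompatible with the assumption that $H$ has no kernel. Situation (a) is exactly condition~1 of that lemma and situation (b) is exactly condition~2, so either of them would force $H$ to have a kernel, which is impossible. In situation (c), if $l(P)$ were even we would be in condition~3 of the lemma and $H$ would again have a kernel; hence $l(P)$ must be odd, which is the first assertion of the corollary. Symmetrically, in situation (d), if $l(P)$ were odd we would be in condition~4 of the lemma, so $l(P)$ must be even, which is the second assertion.

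Since (a) and (b) are ruled out and (c) and (d) respectively force the first and second assertions, one of the two assertions always holds, completing the proof. I do not expect a genuine obstacle here: all the substance lives in \Cref{thm: H' kernel then H kernel}, and what remains is purely combinatorial bookkeeping over membership in $N'$ and the parity of $l(P)$; the only point meriting a line of care is confirming that the four-way split is exhaustive, including the degenerate case of a cyclic ear.
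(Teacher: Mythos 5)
Your proof is correct and matches the paper's intent exactly: the corollary is stated there without proof as an immediate consequence of \Cref{thm: H' kernel then H kernel}, obtained by ruling out, via that lemma, every configuration of $x_0,x_r$ and the parity of $l(P)$ that would force $H$ to have a kernel. Your case analysis is the exhaustive bookkeeping the paper leaves implicit.
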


Note that if we apply \Cref{thm: H' kernel then H kernel} in an ear decomposition of a digraph of $\mathcal{LE}_2$, we obtain a sufficient condition for a digraph with a kernel, to imply that the previous subdigraph in the ear decomposition has a kernel.

\begin{theorem}\label{thm: D kernel then Dk-1 has kernel}
Let $D$ be a strong nonseparable digraph, let $(D_0,D_1,\ldots,D_k)$ be an ear decomposition of $D$ where $P_{i-1}=(x_0,x_1,\ldots,x_{r-1},x_r)$ is the ear of $D_{i-1}$ in $D$, with $l(P_{i-1})\geq 2$. If $D_i$ has a kernel $N$, and one of the following assertions hold:

\begin{enumerate}
\item $x_0,x_r\in N$.
\item $x_0\in N$ and $x_r\notin N$.
\item $x_0\notin N$, $x_r\in N$ and $l(P_{i-1})$ is even.
\item $x_0,x_r\notin N$ and $l(P_{i-1})$ is odd.
\end{enumerate}
 Then $D_{i-1}$ has a kernel.
\end{theorem}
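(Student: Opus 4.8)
The plan is to reduce the statement directly to \Cref{thm: H' kernel then H kernel}. By the definition of an ear decomposition, each term $D_j$ of the nested sequence $(D_0,D_1,\ldots,D_k)$ is a strong nonseparable subdigraph of $D$, and $D_i = D_{i-1}\cup P_{i-1}$, where $P_{i-1}=(x_0,x_1,\ldots,x_{r-1},x_r)$ is an ear of $D_{i-1}$ in $D$; by hypothesis $l(P_{i-1})\geq 2$. Thus the pair of digraphs $H:=D_{i-1}$, $H':=D_i$, together with the path $P:=P_{i-1}$ inside the strong nonseparable digraph $D$, satisfies verbatim the hypotheses of \Cref{thm: H' kernel then H kernel}, upon also renaming the kernel $N$ of $D_i$ to the $N'$ of that lemma.

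Next I would observe that the four alternative assertions listed in the present theorem are precisely the four alternative assertions of \Cref{thm: H' kernel then H kernel}, with $l(P_{i-1})$ in the role of $l(P)$ and with the endpoints $x_0,x_r$ of $P_{i-1}$ in the role of the endpoints of $P$. Consequently, whichever of the four cases holds for $N$ in $D_i$, the correspondingly numbered case of \Cref{thm: H' kernel then H kernel} applies, and we conclude that $H=D_{i-1}$ has a kernel. This completes the argument.

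There is essentially no obstacle here: the substantive work has already been carried out in \Cref{thm: H' kernel then H kernel}, and all that remains is to confirm that the objects appearing in an ear decomposition meet that lemma's hypotheses, which is immediate from the definitions recalled in the introduction. If anything needs care, it is only the bookkeeping: aligning the indices so that $P_{i-1}$ is the ear of $D_{i-1}$ and $D_i = D_{i-1}\cup P_{i-1}$ (rather than the reverse), and tracking the renaming of the kernel from $N'$ to $N$. One may also remark that iterating this one-step argument along an ear decomposition in which the \emph{good} case of \Cref{thm: H' kernel then H kernel} occurs at every stage would propagate a kernel of $D=D_k$ down to a kernel of each $D_j$, in particular of the initial cycle $D_0$; but since the statement asserts only a single step, a direct appeal to the lemma suffices.
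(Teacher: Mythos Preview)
Your proposal is correct and matches the paper's approach exactly: the paper does not give a separate proof of this theorem but simply remarks that applying \Cref{thm: H' kernel then H kernel} to the pair $H=D_{i-1}$, $H'=D_i$ in an ear decomposition yields the result. Your identification of $H$, $H'$, $P$, and $N'$ and your observation that the four cases line up verbatim is precisely what is needed.
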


\begin{corollary}\label{coro: Di kernel Di-1 no kernel thn options}
Let $D$ be a strong nonseparable digraph, let $(D_0,D_1,\ldots,D_k)$ be an ear decomposition of $D$ where $P_{i-1}=(x_0,x_1,\ldots,x_{r-1},x_r)$ is the ear of $D_{i-1}$ in $D$, with $l(P)\geq 2$. If $D_i$ has a kernel $N$ but $D_{i-1}$ has no kernel, then one of the following assertions holds:

\begin{enumerate}
\item $x_0\notin N$, $x_i\in N$ and $l(P_{i-1})$ is odd.
\item $x_0,x_i\notin N$ and $l(P_{i-1})$ is even.
\end{enumerate}
\end{corollary}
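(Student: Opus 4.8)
The plan is to obtain this corollary as the contrapositive of \Cref{thm: D kernel then Dk-1 has kernel}. I would assume that $D_i$ has a kernel $N$ while $D_{i-1}$ has no kernel, and then show that none of the four sufficient conditions (1)--(4) listed in \Cref{thm: D kernel then Dk-1 has kernel} can hold, since each of them would force $D_{i-1}$ to have a kernel, contradicting our assumption.

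First I would perform a case analysis on the membership of the two endpoints $x_0$ and $x_r$ of the ear $P_{i-1}$ in $N$; these four possibilities are clearly exhaustive. If $x_0, x_r \in N$ we are in condition (1), and if $x_0 \in N$ and $x_r \notin N$ we are in condition (2); in either case \Cref{thm: D kernel then Dk-1 has kernel} yields a kernel of $D_{i-1}$, a contradiction. Hence only the cases $x_0 \notin N, x_r \in N$ and $x_0, x_r \notin N$ survive.

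Next, within the case $x_0 \notin N$ and $x_r \in N$, I would rule out $l(P_{i-1})$ being even, since otherwise condition (3) holds and $D_{i-1}$ would again have a kernel; thus $l(P_{i-1})$ is odd, which is exactly conclusion~1. Symmetrically, in the case $x_0, x_r \notin N$, if $l(P_{i-1})$ were odd then condition (4) would apply, so $l(P_{i-1})$ must be even, which is conclusion~2. (The index in the statement should read $x_r$ rather than $x_i$.)

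I do not expect a genuine obstacle here: the argument is a direct logical negation of a theorem already established in the excerpt, which itself rests on \Cref{thm: H' kernel then H kernel} applied with $H = D_{i-1}$, $H' = D_i$, and $P = P_{i-1}$. The only points that require a little care are checking that the split on endpoint membership is exhaustive and that the parity conditions appearing in (3) and (4) pair up correctly with the two surviving membership cases.
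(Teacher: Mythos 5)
Your argument is correct and is exactly the route the paper intends: the corollary is stated without proof as the contrapositive of \Cref{thm: D kernel then Dk-1 has kernel}, obtained by ruling out each of its four sufficient conditions and keeping only the two surviving membership/parity combinations. Your remark that $x_i$ should read $x_r$ in the statement is also right.
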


From \Cref{thm: D kernel then Dk-1 has kernel} and \Cref{coro: Di kernel Di-1 no kernel thn options}, we have the following result.

\begin{theorem}
Let $D$ be a strong nonseparable digraph, and let $(D_0,D_1,\ldots,D_k)$ be an ear decomposition of $D$ where
every ear has a length of at least 2. If $D$ has a kernel $N$, then one of the following assertions holds:

\begin{enumerate}
\item $D_j$ has a kernel for every $j\in \{0,\ldots,k\}$. In particular, $D_0$ is an even cycle.
\item There is $j\in \{0,\ldots,k\}$ such that $D_j$ has no kernel but $D_i$ has a kernel for every $i\geq j+1$. In particular, if $P_{j}=(x_0,x_1,\ldots,x_{r-1},x_r)$ is the ear of $D_j$ and $N$ is a kernel of $D_{j+1}$, then one of the following assertions holds:
\begin{enumerate}
\item $x_0\notin N$, $x_r\in N$ and $l(P_{j})$ is odd.
\item $x_0,x_r\notin N$ and $l(P_{j})$ is even.
\end{enumerate}
\end{enumerate}

\end{theorem}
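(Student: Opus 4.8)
The plan is to combine the downward‑propagation result \Cref{thm: D kernel then Dk-1 has kernel} and its \Cref{coro: Di kernel Di-1 no kernel thn options} with the elementary observation that a directed cycle $\overrightarrow{C}_n$ has a kernel if and only if $n$ is even. First I would split into two exhaustive cases according to whether or not $D_j$ has a kernel for every $j\in\{0,\ldots,k\}$.

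In the first case there is nothing to prove for the main part of assertion~1, since it coincides with the case hypothesis; only the ``in particular'' requires a word. By the definition of an ear decomposition, $D_0$ is a directed cycle, and in a directed cycle every vertex has out‑degree exactly $1$, so if $v\notin N$ then its unique out‑neighbour lies in $N$; combined with independence, $N$ must consist of alternate vertices along $D_0$. Such a set exists precisely when the length of the cycle is even, so $D_0$ is an even cycle, as claimed.

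For the second case, let $j$ be the largest index in $\{0,\ldots,k\}$ for which $D_j$ has no kernel. Since $D=D_k$ has the kernel $N$, we have $j\le k-1$, so $D_{j+1}$ is defined and, by the maximality of $j$, $D_i$ has a kernel for every $i\ge j+1$; this gives the main part of assertion~2. For the ``in particular'', fix any kernel $N$ of $D_{j+1}$ and let $P_j=(x_0,x_1,\ldots,x_r)$ be the ear of $D_j$ in $D$, so that $D_{j+1}=D_j\cup P_j$ and $l(P_j)\ge 2$. Since $D_j$ and $D_{j+1}$ are strong nonseparable subdigraphs of the strong nonseparable digraph $D$ and $P_j$ is an ear of length at least $2$, I may apply \Cref{coro: Di kernel Di-1 no kernel thn options} to the pair $(D_j,D_{j+1})$; because $D_{j+1}$ has the kernel $N$ while $D_j$ has none, it yields that either $x_0\notin N$, $x_r\in N$ and $l(P_j)$ is odd, or $x_0,x_r\notin N$ and $l(P_j)$ is even, which is exactly 2(a)--2(b).

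This is essentially bookkeeping on top of the already‑proved facts, so I do not expect a genuine obstacle; the only points needing care are the index shift when invoking \Cref{coro: Di kernel Di-1 no kernel thn options} (its statement is phrased for a consecutive pair $D_{i-1},D_i$, so one uses $i=j+1$) and the justification of the ``even cycle'' claim from the structure of kernels of directed cycles. \Cref{thm: D kernel then Dk-1 has kernel} enters only through this corollary, which is its contrapositive sharpened by the obvious case analysis on whether $x_0,x_r$ lie in the kernel.
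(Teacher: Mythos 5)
Your proof is correct and is exactly the argument the paper intends: the paper states this theorem without proof as an immediate consequence of \Cref{thm: D kernel then Dk-1 has kernel} and \Cref{coro: Di kernel Di-1 no kernel thn options}, and your case split (all $D_j$ have kernels versus taking the largest $j$ with $D_j$ kernel-free and invoking the corollary for the pair $D_j, D_{j+1}$), together with the observation that a directed cycle has a kernel precisely when its length is even, fills in the omitted details faithfully.
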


On the other hand, we will give conditions to a strong non-separable digraph $H$ has a kernel, then the digraph $H\cup P$ has a kernel, where $P$ is a long ear of $H$.

\begin{lemma}\label{lem: H kernel then H' kernel}
Let $D$ be a strong nonseparable digraph, let $H$ be a strong nonseparable digraph, and let $P=(x_0,y_0,\ldots,x_{r-1},x_r)$ be an ear of $H$ in $D$, with $l(P)\geq 2$. If $H$ has a kernel $N$ and one of the following assertions holds:

\begin{enumerate}
\item $x_0,x_r\in N$ and $l(P)$ is even.
\item $x_0\in N$, $x_r\notin N$ and $l(P)$ is odd.
\item $x_0\notin N$ and $x_r\in N$.
\item $x_0,x_r\notin N$.
\end{enumerate}
 Then $H'=H\cup P$ has a kernel.
 \end{lemma}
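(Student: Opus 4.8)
The plan is to mimic the construction used in \Cref{thm: H' kernel then H kernel}, but now in the ``forward'' direction: given a kernel $N$ of $H$, extend it to a kernel $N'$ of $H' = H\cup P$ by adding a suitable subset of the internal vertices $x_1,\ldots,x_{r-1}$ of the ear $P$. The key observation is that the internal vertices of $P$ form a directed path whose only attachment points to $H$ are $x_0$ (out-neighbor $x_1$ inside $P$) and $x_r$ (in-neighbor $x_{r-1}$ inside $P$); so the only vertices whose absorption status can change when passing from $H$ to $H'$ are the new internal vertices, and the only vertex of $H$ whose out-neighborhood was enlarged is $x_0$ (via $x_1$), which is irrelevant since $x_0\in V(H)$ is already absorbed by $N$ when $x_0\notin N$. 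Hence the whole problem reduces to choosing, on the path $x_0,x_1,\ldots,x_r$, an independent ``interval pattern'' $S\subseteq\{x_1,\ldots,x_{r-1}\}$ such that (i) $N\cup S$ stays independent — i.e. $x_1\notin S$ when $x_0\in N$, and $x_{r-1}\notin S$ when $x_r\in N$, and no two consecutive $x_i$'s are chosen — and (ii) every internal vertex not in $S$ has an out-neighbor in $N\cup S$, where the out-neighbor of $x_i$ along $P$ is $x_{i+1}$ and $x_{r-1}$'s out-neighbor is $x_r$, which lies in $N$ or not according to the case.

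First I would set up the reduction above precisely: let $N' = N\cup S$ and verify that independence and absorption of $N'$ in $H'$ follow from independence and absorption of $N$ in $H$ together with the two path-conditions (i) and (ii) on $S$. Then I would treat the four cases by exhibiting the explicit pattern $S$, exactly as the paper does for the small-quasi-kernel argument in \Cref{tabla quasi}, but now with the ``period-$2$'' spacing appropriate to kernels instead of period $3$. Concretely:
\begin{itemize}
\item[] Case 3 ($x_0\notin N$, $x_r\in N$): take $S=\{x_{r-2},x_{r-4},\ldots\}$, i.e. start from $x_{r-2}$ and step back by $2$; this is forced to stop before $x_1$ appropriately and is independent from $x_r$ since $x_{r-1}\notin S$. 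One checks $x_0$'s position is harmless because $x_0\in V(H)$ is absorbed in $H$ (it is either in $N$ — impossible here — or has an out-neighbor in $N\cap V(H)$), so we don't need $x_1\in S$; the vertices $x_1,\ldots,x_{r-1}$ not chosen each have their $P$-successor chosen or equal to $x_r\in N$.
\item[] Case 4 ($x_0,x_r\notin N$): take $S=\{x_{r-1},x_{r-3},\ldots\}$, so $x_{r-1}\in S$ absorbs nothing is needed past it, and independence with $x_0$ holds iff the backward chain does not reach $x_1$ adjacent to... here we must also ensure $x_1\notin S$ or, if $x_1\in S$, that $x_0\notin N$ — which is the hypothesis; so no conflict at the $x_0$ end either way.
\item[] Case 1 ($x_0,x_r\in N$, $l(P)$ even): here both endpoints are in $N$, so $x_1,x_{r-1}\notin S$; with $r$ even, the pattern $S=\{x_2,x_4,\ldots,x_{r-2}\}$ works: it is independent, avoids $x_1$ and $x_{r-1}$, and every unchosen internal vertex $x_{2j-1}$ has successor $x_{2j}\in S$ (or $x_{r-1}$ has successor $x_r\in N$).
\item[] Case 2 ($x_0\in N$, $x_r\notin N$, $l(P)$ odd): symmetric to Case 1 via the parity; take $S=\{x_2,x_4,\ldots,x_{r-1}\}$ (last index $r-1$ since $r$ is odd), again avoiding $x_1$ because $x_0\in N$, and absorbing all unchosen internal vertices along $P$.
\end{itemize}
In each case I would also note the degenerate sub-cases $r=2$ and $r=3$ separately, confirming that $S=\emptyset$ already works (e.g. for $r=2$, Cases 3 and 4 need $x_1$ absorbed: its successor is $x_2=x_r$, which lies in $N$ in Case 3 and, in Case 4, we would instead put $S=\{x_1\}$, legal since $x_0\notin N$).

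The parity hypotheses in Cases 1 and 2 are exactly what make a consistent period-$2$ pattern fit between the two ``forbidden'' endpoints $x_1$ and $x_{r-1}$, so I expect the bookkeeping of these parity/endpoint constraints — making sure the chosen arithmetic progression simultaneously avoids the neighbors of the endpoints that lie in $N$ and still absorbs every skipped vertex — to be the only real point requiring care; it is routine but must be done case by case. Once the four patterns are verified, $N' = N\cup S$ is a kernel of $H'$ and the proof is complete.
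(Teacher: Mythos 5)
Your proposal is correct and follows essentially the same route as the paper: the same four-case split with the same explicit periodic patterns of internal ear vertices added to $N$ (indeed your sets coincide with the paper's in every case and parity), together with the observation that only $x_0$'s out-neighborhood in $H$ is enlarged so absorption within $H$ is undisturbed. The only quibble is the phrasing of your degenerate-case remark (claiming $S=\emptyset$ "already works" for $r=2$ before correcting to $S=\{x_1\}$ in Case 4), but the general formulas you give already handle those cases.
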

 
 \begin{proof}
 Let $D$, $H$, $N$ and $H'$ as in the hypothesis.
Observe that $H$ is an induced subdigraph of $H'$, it follows that $N$ is also an independent set of $H'$.
  We will divide the proof into cases, depending on which of the assertions of the hypothesis hold. For each of the cases, we will give the kernel for $H'$, whose proof follows directly from the definition of ear and the case in which we are dealing.
 
 \emph{Case 1}. $x_0,x_r\in N$ and $l(P)$ is even. The kernel of $H'$ is $N\cup \{ x_2,x_4,\ldots,x_{r-2}\}$.
 
  \emph{Case 2}. $x_0\in N$, $x_r\notin N$ and $l(P)$ is odd. The kernel of $H'$ is $N\cup \{ x_2,x_4,\ldots,x_{r-1}\}$.
  
  \emph{Case 3}. $x_0\notin N$ and $x_r\in N$. 
  The kernel of $H'$ is $N\cup \{ x_2,x_4,\ldots,x_{r-2}\}$ when $l(P)$ is even; otherwise the kernel of $H'$ is $N\cup \{ x_1,x_3,\ldots,x_{r-2}\}$.
  
  \emph{Case 4}. $x_0,x_r\notin N$. 
  The kernel of $H'$ is $N\cup \{ x_1,x_3,\ldots,x_{r-1}\}$ when $l(P)$ is even; otherwise the kernel of $H'$ is $N\cup \{ x_2,x_4,\ldots,x_{r-1}\}$.
 \end{proof}

\begin{corollary}\label{coro: H kernel and H' no kernel then options}
Let $D$ be a strong nonseparable digraph, let $H$ be a strong nonseparable digraph, and let $P=(x_0,x_1,\ldots,x_{r-1},x_r)$ be an ear of $H$ in $D$, with $l(P)\geq 2$. If $H$ has a kernel $N$ and $H'=H\cup P$ has no kernel, then one of the following assertions holds:

\begin{enumerate}
\item $x_0,x_r\in N$ and $l(P)$ is odd.
\item $x_0\in N$, $x_r\notin N$ and $l(P)$ is even.

\end{enumerate} 
\end{corollary}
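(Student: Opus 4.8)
The plan is to derive this corollary directly from Lemma~\ref{lem: H kernel then H' kernel} by a complete case analysis on the two Boolean conditions ``$x_0 \in N$'' and ``$x_r \in N$'' together with the parity of $l(P)$. Since $N$ is a kernel of $H$ and $l(P)\geq 2$, exactly one of the eight combinations of (membership of $x_0$ in $N$, membership of $x_r$ in $N$, parity of $l(P)$) occurs, and the goal is to show that whenever $H'$ has no kernel we must land in one of the two combinations listed in the statement.

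First I would observe that the four hypotheses of Lemma~\ref{lem: H kernel then H' kernel} cover exactly six of these eight combinations: assertion~1 covers ($x_0\in N$, $x_r\in N$, $l(P)$ even); assertion~2 covers ($x_0\in N$, $x_r\notin N$, $l(P)$ odd); assertion~3 is parity-free and therefore covers both parities of the case ($x_0\notin N$, $x_r\in N$); and assertion~4 is likewise parity-free and covers both parities of the case ($x_0\notin N$, $x_r\notin N$). Consequently, in each of these six combinations Lemma~\ref{lem: H kernel then H' kernel} applies and guarantees that $H'=H\cup P$ has a kernel, which contradicts the standing hypothesis that $H'$ has no kernel.

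The only two combinations not covered by the lemma are ($x_0\in N$, $x_r\in N$, $l(P)$ odd) and ($x_0\in N$, $x_r\notin N$, $l(P)$ even), which are precisely assertions~1 and~2 of the present statement. Hence, under the hypothesis that $H$ has a kernel $N$ while $H'$ has no kernel, one of these two assertions must hold, as claimed.

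I do not anticipate any genuine obstacle: the argument is simply a contrapositive repackaging of Lemma~\ref{lem: H kernel then H' kernel}. The only point requiring care is the bookkeeping, namely checking that the six ``good'' combinations are matched correctly against the four assertions of the lemma — in particular that assertions~3 and~4 carry no parity restriction and hence each absorb two of the eight combinations — so that exactly the two combinations appearing in the statement are left over.
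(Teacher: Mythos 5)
Your proposal is correct and matches the paper's (implicit) argument: the corollary is stated there without proof precisely because it is the contrapositive of Lemma~\ref{lem: H kernel then H' kernel}, whose four hypotheses cover six of the eight (membership, membership, parity) combinations, leaving exactly the two listed. Your bookkeeping of which combinations each assertion of the lemma absorbs is accurate.
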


Similarly, note that if we apply \Cref{lem: H kernel then H' kernel} on an ear decomposition of a digraph of $\mathcal{LE}_2$, we obtain a sufficient condition for a subdigraph of the ear decomposition with a kernel to imply that the following subdigraph in the ear decomposition has a kernel too.

\begin{theorem}\label{thm: Di kernel + options thn Di+1 kernel}
Let $D$ be a strong nonseparable digraph, let $(D_0,D_1,\ldots,D_k)$ be an ear decomposition of $D$ where $P_{i}=(x_0,x_1,\ldots,x_{r-1},x_r)$ is the ear of $D_{i}$ in $D$, with $l(P_{i})\geq 2$.
 If $D_{i}$ has a kernel $N$, and one of the following assertions holds:

\begin{enumerate}
\item $x_0,x_r\in N$ and $l(P_i)$ is even.
\item $x_0\in N$, $x_r\notin N$ and $l(P_i)$ is odd.
\item $x_0\notin N$ and $x_r\in N$.
\item $x_0,x_r\notin N$.
\end{enumerate}
 Then $D_{i+1}$ has a kernel.

 \end{theorem}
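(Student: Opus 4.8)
The plan is to obtain this theorem as an immediate instantiation of \Cref{lem: H kernel then H' kernel}. By the definition of an ear decomposition, every term $D_j$ of the sequence $(D_0,\ldots,D_k)$ is a nonseparable strong subdigraph of $D$, and $D_{i+1}=D_i\cup P_i$, where $P_i$ is an ear of $D_i$ in $D$. So I would set $H:=D_i$ and $H':=D_{i+1}=H\cup P_i$. Then $H$ is a strong nonseparable digraph, $P_i=(x_0,x_1,\ldots,x_r)$ is an ear of $H$ in $D$, and $l(P_i)\geq 2$ by hypothesis; hence all the structural hypotheses of \Cref{lem: H kernel then H' kernel} are satisfied.

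Next I would check, case by case, that the four endpoint/parity conditions of the present statement coincide exactly with the four conditions of \Cref{lem: H kernel then H' kernel}: ``$x_0,x_r\in N$ and $l(P_i)$ even'', ``$x_0\in N$, $x_r\notin N$ and $l(P_i)$ odd'', ``$x_0\notin N$, $x_r\in N$'', and ``$x_0,x_r\notin N$'' are precisely cases~1--4 there. Since $D_i$ has a kernel $N$ by hypothesis, in each case \Cref{lem: H kernel then H' kernel} applies and yields a kernel of $H'=D_{i+1}$; concretely the lemma even exhibits this kernel explicitly, by adjoining to $N$ a suitable set of alternate internal vertices of $P_i$ (the exact choice depending on the case and on the parity of $l(P_i)$).

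I do not expect a genuine obstacle here; the only point requiring a little care is bookkeeping — ensuring that, in each of the four cases, the vertex set returned by \Cref{lem: H kernel then H' kernel} remains independent and absorbent when regarded inside $D_{i+1}$ rather than inside the abstract digraph $H\cup P$ of the lemma's statement. But this is automatic, since $D_{i+1}$ \emph{is} the digraph $H\cup P_i$ to which the lemma is applied, and the lemma's conclusion is a statement about $H\cup P$ itself. Thus the proof reduces to matching the cases and invoking the lemma once.
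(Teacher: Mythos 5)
Your proposal is correct and is exactly the paper's approach: the theorem is stated as an immediate instantiation of \Cref{lem: H kernel then H' kernel} with $H=D_i$, $H'=D_{i+1}=D_i\cup P_i$, and the paper gives no further proof beyond noting this application. Your case-matching and the remark that $D_{i+1}$ literally is the digraph $H\cup P$ of the lemma are the whole argument.
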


\begin{corollary}\label{coro: Di kernel + Di+1 no kernel thn options}
Let $D$ be a strong nonseparable digraph, let $(D_0,D_1,\ldots,D_k)$ be an ear decomposition of $D$ where $P_{i}=(x_0,x_1,\ldots,x_{r-1},x_r)$ is the ear of $D_{i}$ in $D$, with $l(P_{i})\geq 2$.
 If $D_{i}$ has a kernel $N$  but $D_{i+1}$ has no kernel, then one of the following assertions holds:

\begin{enumerate}
\item $x_0,x_r\in N$ and $l(P_i)$ is odd.
\item $x_0\in N$, $x_r\notin N$ and $l(P_i)$ is even.

\end{enumerate} 
\end{corollary}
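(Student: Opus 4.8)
The plan is to derive \Cref{coro: Di kernel + Di+1 no kernel thn options} as the contrapositive of \Cref{thm: Di kernel + options thn Di+1 kernel}. In an ear decomposition, both $D_i$ and $D_{i+1}=D_i\cup P_i$ are strong nonseparable digraphs, and $P_i$ is an ear of $D_i$ in $D$ of length at least $2$, so the hypotheses of \Cref{thm: Di kernel + options thn Di+1 kernel} are satisfied with $H=D_i$ and $H'=D_{i+1}$. Since we assume $D_i$ has a kernel $N$ while $D_{i+1}$ has no kernel, none of the four sufficient conditions listed in \Cref{thm: Di kernel + options thn Di+1 kernel} can hold for the pair $(N,P_i)$; otherwise that theorem would produce a kernel of $D_{i+1}$, a contradiction. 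It therefore suffices to determine which configurations of $x_0,x_r$ with respect to $N$, and which parities of $l(P_i)$, are \emph{not} excluded by the failure of those four assertions.

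First I would split on the membership of the endpoints of $P_i$. If $x_0\notin N$, then according to whether $x_r\in N$ or $x_r\notin N$, assertion 3 or assertion 4 of \Cref{thm: Di kernel + options thn Di+1 kernel} holds — each of these holds irrespective of the parity of $l(P_i)$ — contradicting the fact that $D_{i+1}$ has no kernel. Hence $x_0\in N$. If moreover $x_r\in N$, then assertion 1 of \Cref{thm: Di kernel + options thn Di+1 kernel} holds whenever $l(P_i)$ is even, so $l(P_i)$ must be odd, which is assertion 1 of the corollary. If instead $x_r\notin N$, then assertion 2 of \Cref{thm: Di kernel + options thn Di+1 kernel} holds whenever $l(P_i)$ is odd, so $l(P_i)$ must be even, which is assertion 2 of the corollary. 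This exhausts all cases.

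There is no genuine obstacle here: the argument is a finite case analysis carried out against hypotheses already established, and the only point requiring care is to keep the parity bookkeeping aligned with the four assertions of \Cref{thm: Di kernel + options thn Di+1 kernel}. Equivalently, among the eight configurations obtained by combining the four endpoint-membership patterns with the two parities of $l(P_i)$, the only ones not covered by the theorem's hypotheses are exactly $\{x_0,x_r\in N,\ l(P_i)\ \text{odd}\}$ and $\{x_0\in N,\ x_r\notin N,\ l(P_i)\ \text{even}\}$, which are precisely the two conclusions of the corollary.
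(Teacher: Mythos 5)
Your proof is correct and matches the paper's intent: the corollary is stated as an immediate consequence of \Cref{thm: Di kernel + options thn Di+1 kernel} (equivalently \Cref{lem: H kernel then H' kernel}), obtained by ruling out the four sufficient conditions and observing that the only remaining endpoint/parity configurations are the two listed. Your case analysis and parity bookkeeping are accurate, so nothing further is needed.
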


From \Cref{thm: Di kernel + options thn Di+1 kernel} and \Cref{coro: Di kernel + Di+1 no kernel thn options}, we have the following result.

\begin{theorem}
Let $D$ be a strong nonseparable digraph, and let $(D_0,D_1,\ldots,D_k)$ be an ear decomposition of $D$ where
every ear has a length of at least 2. If $D$ has no kernel, then one of the following assertions holds:

\begin{enumerate}
\item $D_j$ has no kernel for every $j\in \{0,\ldots,k\}$. In particular, $D_0$ is an odd cycle.
\item There is $j\in \{0,\ldots,k\}$ such that $D_j$ has a kernel $N$ but $D_i$ has no kernel for every $i\geq j+1$. In particular, if $P_{j}=(x_0,x_1,\ldots,x_{r-1},x_r)$ is the ear of $D_j$, then one of the following assertions holds:
\begin{enumerate}
\item $x_0,x_r\in N$ and $l(P_j)$ is odd.
\item $x_0\in N$, $x_r\notin N$ and $l(P_j)$ is even.
\end{enumerate} 
\end{enumerate}

\end{theorem}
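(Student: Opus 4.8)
The plan is to exploit the ``one-step'' implications already established, namely \Cref{thm: Di kernel + options thn Di+1 kernel} and its \Cref{coro: Di kernel + Di+1 no kernel thn options}, together with a simple extremal choice of index along the decomposition. Since $D_k=D$, the hypothesis says that $D_k$ has no kernel. I would then distinguish two cases according to whether \emph{some} member $D_j$ of the decomposition has a kernel at all.

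First, suppose that no $D_j$ has a kernel, for $j\in\{0,\ldots,k\}$. Then in particular $D_0$, which by definition of an ear decomposition is a directed cycle, has no kernel. A quick inspection of directed cycles shows that every even directed cycle (and also the digon) admits a kernel formed by alternate vertices, so a kernel-free cycle must be an odd cycle. This gives alternative~1, together with its ``in particular'' clause.

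Second, suppose that the set $J=\{\,j\in\{0,\ldots,k\} : D_j \text{ has a kernel}\,\}$ is nonempty, and let $j=\max J$, with $N$ a kernel of $D_j$. Because $D_k$ has no kernel we have $k\notin J$, hence $j\le k-1$, so $D_{j+1}=D_j\cup P_j$ is defined, and by maximality of $j$ none of $D_{j+1},\ldots,D_k$ has a kernel. Applying \Cref{coro: Di kernel + Di+1 no kernel thn options} to the consecutive pair $(D_j,D_{j+1})$, whose connecting ear $P_j=(x_0,x_1,\ldots,x_{r-1},x_r)$ satisfies $l(P_j)\ge 2$, yields precisely that either $x_0,x_r\in N$ with $l(P_j)$ odd, or $x_0\in N$, $x_r\notin N$ with $l(P_j)$ even. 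That is alternative~2.

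I do not expect a genuine obstacle here: the argument is just the ``global'' repackaging of the local step \Cref{lem: H kernel then H' kernel} / \Cref{coro: H kernel and H' no kernel then options}, and the only content is the extremal index choice, which converts the implication ``$D_i$ has a kernel and the parity obstruction holds $\Rightarrow$ $D_{i+1}$ has no kernel'' into a statement about where along the decomposition kernels can first fail. The sole point requiring a separate (entirely elementary) remark is the base case, namely that a cycle without a kernel must be odd, which is what pins down $D_0$ in the first alternative.
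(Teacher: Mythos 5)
Your proof is correct and follows exactly the route the paper intends: the paper states this theorem as an immediate consequence of \Cref{thm: Di kernel + options thn Di+1 kernel} and \Cref{coro: Di kernel + Di+1 no kernel thn options}, and your extremal choice $j=\max J$ together with the elementary observation that a kernel-free directed cycle must be odd is precisely the missing glue. Nothing to add.
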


\section{Classical colorings}\label{sec classical colorings}

In this section, we will prove that digraphs in $\mathcal{LE}_2$ have a chromatic number at most 3, and as a consequence a digraph in $\mathcal{LE}_2$ has a dichromatic number 2 or 3.

\begin{theorem}

Let $D$ be a strong digraph. If $D \in \mathcal{LE}_2$,
 then $\chi(D)\leq 3$.
\end{theorem}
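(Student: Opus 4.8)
The plan is to prove the stronger statement that $D$ admits a proper $3$-colouring, and to do it by induction on the number $k$ of ears in an ear decomposition $(D_0,D_1,\ldots,D_k)$ of $D$ in which every ear has length at least $2$ (such a decomposition exists because $D\in\mathcal{LE}_2$). For the base case $k=0$ the digraph $D_0$ is a cycle, and every cycle has chromatic number at most $3$ (namely $2$ if it is even and $3$ if it is odd), so $\chi(D_0)\le 3$.

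For the inductive step, assume the claim holds for all strong digraphs having an ear decomposition with $k$ ears, all of length at least $2$, and let $D$ have such a decomposition $(D_0,\ldots,D_k,D_{k+1})$ with $k+1$ ears. Then $D'=D_k$ is again a strong digraph with an ear decomposition $(D_0,\ldots,D_k)$ of $k$ ears, all of length at least $2$, so by the induction hypothesis $D'$ has a proper $3$-colouring $c'$. Let $P_k=(x_0,x_1,\ldots,x_r)$ be the ear of $D_k$ in $D$, so $r\ge 2$, the internal vertices $x_1,\ldots,x_{r-1}$ lie outside $V(D_k)$, and $x_0,x_r\in V(D_k)$ (with $x_0=x_r$ allowed, in the case where $P_k$ is a cycle). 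Since $r\ge 2$, every arc of $P_k$ is incident with one of the internal vertices $x_1,\ldots,x_{r-1}$; hence $P_k$ creates no edge of $UG(D)$ joining two vertices of $V(D_k)$, so $c'$ is still a proper colouring of the subdigraph of $D$ induced by $V(D_k)$. It therefore suffices to assign colours from $\{1,2,3\}$ to $x_1,\ldots,x_{r-1}$ so that the path $x_0x_1\cdots x_r$ becomes properly coloured.

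This last step rests on the elementary fact that any path $v_0v_1\cdots v_m$ with $m\ge 2$ whose ends $v_0,v_m$ are already coloured from a palette of three colours can be completed to a proper $3$-colouring of the whole path: for $i=1,\ldots,m-2$ pick $c(v_i)$ different from $c(v_{i-1})$ and from $c(v_m)$ (possible, since at most two colours are forbidden), and then pick $c(v_{m-1})$ different from $c(v_{m-2})$ and from $c(v_m)$ (again at most two colours are forbidden); when $m=2$ the first stage is vacuous and one simply chooses $c(v_1)\notin\{c(v_0),c(v_2)\}$. Applying this with $v_i=x_i$ and $m=r$ produces a proper $3$-colouring of $x_0x_1\cdots x_r$ that agrees with $c'$ at $x_0$ and $x_r$; when $P_k$ is a cycle we have $x_0=x_r$, so both ends carry the same colour, which is a permissible instance of the fact just proved. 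Combining this assignment with $c'$ yields a proper $3$-colouring of $D=D_{k+1}$, which completes the induction.

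The only place that needs any care is the short-ear case $r=2$ (and, similarly, a cycle-ear of length $2$), where the single new vertex $x_1$ must simultaneously avoid the colours of both ends: this is precisely why three colours, and not two, are required, and it is also where the hypothesis $D\in\mathcal{LE}_2$ is used — for general strong digraphs (that is, for $\mathcal{LE}_1$, which contains every symmetric complete digraph) the chromatic number is unbounded, so length $1$ ears must be excluded. I do not expect any genuine obstacle beyond this bookkeeping.
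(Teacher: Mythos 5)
Your proof is correct and follows essentially the same strategy as the paper: induction on the length of the ear decomposition, keeping the inductive $3$-colouring on $D_k$ and extending it over the internal vertices of the new ear. The only difference is cosmetic — where the paper splits into cases $l(P_k)=2$, $l(P_k)=3$, and $l(P_k)>3$ (using bipartiteness of the inner segment in the last case), you complete the path by a single greedy left-to-right assignment that always avoids the colour of the far endpoint, which handles all lengths (and the cycle-ear case $x_0=x_r$) uniformly.
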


\begin{proof}
Let $D$ be a strong digraph with an ear decomposition $(D_0,D_1,\ldots,D_k)$ such that every ear of $D_i$ has a length of at least two. 
We will proceed by induction over $k$. 
If $k=0$, then $D$ is a cycle, and there is $c:V(D_0)\rightarrow \{1,2,3\}$ a proper 3-coloring of $V(D_0)$.
 Suppose that if $D'$ is a strong digraph with an ear decomposition $(D'_0,D'_1,\ldots,D'_k)$ such that every ear of $D'_i$ has a length of at least two, then there is $c':V(D')\rightarrow \{1,2,3\}$ a proper $3$-coloring of $D'$.
Suppose that $D$ is a strong digraph with an ear decomposition $(D_0,D_1,\ldots,D_k,D_{k+1})$ such that every ear of $D_i$ has a length of at least two. Let $P_{k}=(x_0,x_1,\ldots,x_r)$ be the ear of $D_k$ in $D$.
 Note that $D'=D_k$ is a strong digraph with an ear decomposition $(D_0,D_1,\ldots,D_k,)$ such that every ear of $D_i$ has a length of at least two, by induction hypothesis, $c':V(D')\rightarrow \{1,2,3\}$ a proper $3$-coloring of $D'$. We will consider 3 cases.
 
 \emph{Case 1}. $l(P_k)=2$, it follows that $x_r=x_2$. Hence, there is a color in $\{1,2,3\}$, which is not used to color either $x_0$ or $x_2$. Color $x_1$ with that color and obtain a proper $3$-coloring of $D$.
 
 \emph{Case 2}. $l(P_k)=3$, it follows that $x_r=x_3$. We have to color $x_1$ and $x_2$.
Note that there is a color in $\{1,2,3\}$, which is different than $c'(x_0)$ and $c'(x_3)$; color $x_1$ with that color, say $i$. Now, there is a color in $\{1,2,3\}$, which is different than $i$ and $c'(x_3)$, color $x_2$ with that color, and obtain a proper $3$-coloring of $D$.

\emph{Case 3}. $l(P_k)>3$.
There is a color in $\{1,2,3\}$, which is different than $c'(x_0)$ and $c'(x_r)$; color $x_1$ and $x_{r-1}$ with that color, say $i$. Observe that $(x_2,x_3,\ldots,x_{r-2})$ is a bipartite digraph, hence, we can color its vertices with colors other than $i$, and obtain a proper $3$-coloring of $D$.

\end{proof}

The \emph{dichromatic number}  $\overrightarrow{\chi}(D)$, of a digraph $D$ is the minimum integer $k$ such that $D$ admits
a $k$-coloring where the chromatic classes induce acyclic subdigraphs of $D$. 
Since every chromatic class of a proper $k$-coloring of a digraph $D$ induces an acyclic subdigraph of $D$, we have that $\overrightarrow{\chi}(D)\leq \chi(D)$. Hence, we have the following corollary.

\begin{corollary}
Let $D$ be a strong digraph. If $D\in \mathcal{LE}_2$,
 then $2\leq \overrightarrow{\chi}(D)\leq 3$.
\end{corollary}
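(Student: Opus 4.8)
The plan is to combine the two facts recorded just above the statement: the theorem asserting $\chi(D)\le 3$ for every $D\in\mathcal{LE}_2$, and the general inequality $\overrightarrow{\chi}(D)\le\chi(D)$ (valid because every chromatic class of a proper coloring is independent, hence induces an acyclic subdigraph). Feeding the theorem into this inequality yields the upper bound $\overrightarrow{\chi}(D)\le 3$ immediately, with no further computation.

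For the lower bound $\overrightarrow{\chi}(D)\ge 2$, first I would invoke the definition of $\mathcal{LE}_2$: any $D\in\mathcal{LE}_2$ carries an ear decomposition $(D_0,D_1,\ldots,D_k)$ whose first term $D_0$ is a directed cycle, so $D$ contains a directed cycle and is therefore not acyclic. Consequently a $1$-coloring of $D$ is impossible in the sense required by the dichromatic number, since its single chromatic class would be all of $V(D)$ and would have to induce an acyclic subdigraph. Hence $\overrightarrow{\chi}(D)\ge 2$, and together with the upper bound this gives $2\le\overrightarrow{\chi}(D)\le 3$.

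I do not anticipate a real obstacle here, as this is a direct corollary. The only point that needs care is that the lower bound is not automatic for arbitrary digraphs — an acyclic digraph has dichromatic number $1$ — so it must be justified by exhibiting a cycle in $D$, which is exactly what membership in $\mathcal{LE}_2$ supplies via the initial cycle $D_0$ of the ear decomposition.
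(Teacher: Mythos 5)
Your proposal is correct and matches the paper's reasoning: the upper bound comes from combining $\chi(D)\le 3$ with the inequality $\overrightarrow{\chi}(D)\le\chi(D)$, and the lower bound follows because a strong digraph in $\mathcal{LE}_2$ contains a directed cycle, so it is not acyclic. The paper leaves the lower bound implicit, but your justification via the initial cycle $D_0$ of the ear decomposition is exactly the intended one.
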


\section{Oriented coloring}\label{sec oriented coloring}

In this section, we bound the oriented chromatic number of asymmetrical digraphs in $\mathcal{LE}_3$. Furthermore, we will construct a family of digraphs with arbitrarily high oriented chromatic number where every ear has a length of 2, Hence, the hypothesis that the ears have a length of at least 3 is tight.

A digraph $D$ with, order $n$, is \emph{vertex-pancyclic} if and only if every vertex is contained in a $k$-cycle of $D$, for every $k\in\{3,\ldots,n\}$. In \cite{Moon1966OnSO}, Moon proved that every strong tournament is vertex-pancyclic.

\begin{proposition}
There is only one tournament with order $6$, such that for every $i,j\in \{0,1,2,3,4,5\}$ there are a walk from $i$ to $j$ and a walk from $k$ to $i$, both with length $k$, for each $k\in\{3,4,5\}$.
\end{proposition}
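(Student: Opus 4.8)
The plan is to translate the condition into a single, purely algebraic one and then classify. Read the statement as: for every ordered pair $(i,j)$ of vertices and every $k\in\{3,4,5\}$ there is a walk of length exactly $k$ from $i$ to $j$; equivalently, writing $A=A(T)$ and using that $(A^k)_{ij}$ counts walks of length $k$ from $i$ to $j$, we want $A^3$, $A^4$ and $A^5$ all entrywise positive. The first observation is that this is the same as $A^3>0$: in a strongly connected digraph every vertex has an in-neighbour, so $(A^{k+1})_{ij}=\sum_{\ell\colon \ell\to j}(A^k)_{i\ell}$ is positive whenever $A^k$ is, hence $A^3>0$ forces $A^m>0$ for all $m\ge 3$. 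So the proposition becomes: there is a unique tournament $T$ of order $6$ with $A(T)^3>0$.

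Next I would extract structure forced by $A^3>0$. If some vertex $v$ had out-degree $1$, say $N^+(v)=\{w\}$, then $(A^3)_{vw}=(A^2)_{ww}$, which counts closed walks of length $2$ through $w$ and is therefore $0$ in a tournament — a contradiction; applying the same argument to the converse of $T$ shows no vertex has in-degree $1$, and a vertex of out- or in-degree $0$ would break strong connectivity. Hence every out-degree lies in $\{2,3\}$, and since the out-degrees sum to $\binom{6}{2}=15$, exactly three vertices have out-degree $3$ and three have out-degree $2$; that is, $T$ is near-regular.

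Now I would enumerate the near-regular tournaments of order $6$. Let $B$ and $S$ be the sets of vertices of out-degree $3$ and $2$; then $|B|=|S|=3$, and counting arc-ends gives exactly $6$ arcs from $B$ to $S$ and $3$ from $S$ to $B$. Each of $T[B]$ and $T[S]$ is either the directed $3$-cycle $C_3$ or the transitive triple $TT_3$; in each of the four combinations the out-degrees dictate how the $6$ and the $3$ cross-arcs are distributed, and, up to isomorphism, the bipartite orientation between $B$ and $S$ is then forced — except when $T[B]\cong T[S]\cong C_3$, where it splits into two subcases according as the bijection sending each $b\in B$ to the unique $s\in S$ beating it is an orientation-preserving or an orientation-reversing map of the two oriented triangles. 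This yields exactly five near-regular tournaments of order $6$. It remains to cube the adjacency matrix of each. In four of them some ordered pair admits no walk of length $3$ — for instance in the $(TT_3,TT_3)$ tournament there is no length-$3$ walk from the source of $T[S]$ to its sink, and analogous obstructions dispose of the $(C_3,TT_3)$, $(TT_3,C_3)$, and orientation-preserving $(C_3,C_3)$ tournaments. In the orientation-reversing $(C_3,C_3)$ tournament, computing for each of the six vertices $v$ the set of vertices reachable from $v$ by a walk of length $3$ returns the whole vertex set every time; so this tournament satisfies $A^3>0$, and it is the unique one with the stated property.

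The substance of the argument is the last paragraph: correctly listing the five near-regular tournaments — in particular not conflating the two $(C_3,C_3)$ tournaments, which have isomorphic pieces $T[B]$, $T[S]$ but are genuinely distinct — and then cubing five $6\times 6$ adjacency matrices. All of this is elementary and finite; the only real risk is a bookkeeping slip. An alternative is to skip the structural reduction and simply verify the claim against all $56$ tournaments on six vertices, but reducing first to the five near-regular ones keeps the verification short enough to do by hand.
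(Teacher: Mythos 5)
Your argument is correct, but it takes a genuinely different route from the paper. The paper's proof is a brute-force verification against the catalogue of all $56$ non-isomorphic tournaments of order $6$ from the Combinatorial Data website: Table~\ref{table: T fails} exhibits, for each of the other $55$ tournaments, an ordered pair with no walk of length $3$, and Table~\ref{table: T48 walks} exhibits walks of lengths $3$, $4$ and $5$ between every ordered pair of $T_{48}$. You instead reduce the condition to $A^3>0$ (your observation that $A^3>0$ already forces $A^4>0$ and $A^5>0$ is correct and even shows that two thirds of the paper's Table~\ref{table: T48 walks} is redundant), then use the fact that an out- or in-degree equal to $1$ creates a zero entry in $A^3$ to force the score sequence $(2,2,2,3,3,3)$, enumerate the five near-regular tournaments of order $6$ by hand, and check those five. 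I verified the key claims: there are indeed exactly five such tournaments ($2+1+1+1$ over your four subcases), the orientation-reversing $(C_3,C_3)$ tournament satisfies $A^3>0$, and the other four fail (the $(TT_3,C_3)$ case follows from the $(C_3,TT_3)$ case by taking converses, which you could note to save work). What your approach buys is self-containedness and insight: it does not rely on an external enumeration, it explains \emph{why} the tournament is unique, and it identifies $T_{48}$ structurally as the $C_3$-over-$C_3$ tournament with the orientation-reversing matching. What it costs is that the enumeration of the five candidates and the final check of five $6\times 6$ matrices are only sketched; a fully written-out version would need those computations spelled out, whereas the paper's tables, though unilluminating, are already complete.
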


According to the Combinatorial Data website \cite{combinatorial}, there are 56 non-isomorphic tournaments of order 6, each is given as the upper triangle of the adjacency matrix in row order, on one line without spaces. For every $i$ in the interval $[0,47]\cup [49,55]$, \Cref{table: T fails} shows two vertices $i$ and $j$ such that there is no walk from $r$ to $j$ of length $3$ in $T_i$.
 On the other hand, \Cref{table: T48 walks} shows a walk from $i$ to $j$ of length $k$ in $T_{48}$, for each $k\in\{3,4,5\}$.
 
 \begin{table}[!ht]
    \centering
    \begin{tabular}{|l|c|c|c|}
    \hline
        $T_i$ with $i$ in the interval & initial vertex & final vertex & length  \\ \hline
        $[0,11]\cup [27,45]\cup [47]\cup [49]$ & 0 & 1 & 3  \\ \hline
        $[12,13]$ & 0 & 5 & 3  \\ \hline
        $[14]\cup [19,20]\cup [50]$ & 0 & 3 & 3  \\ \hline
        $[15,18]$ & 0 & 4 & 3  \\ \hline
        $[21,26]\cup [52]\cup [54,55]$ & 0 & 2 & 3  \\ \hline
        $[46]$ & 1 & 5 & 3  \\ \hline
        $[51]$ & 3 & 4 & 3  \\ \hline
        $[53]$ & 2 & 4 & 3  \\ \hline
    \end{tabular}
    \caption{There is no 3-path from the initial vertex to the final vertex in $T_i$.}
    \label{table: T fails}
\end{table}

\begin{table}[!ht]
    \centering
    \begin{tabular}{|c|c|c|c|c|}
    \hline
\multicolumn{2}{|c|}{ } & \multicolumn{3}{|c|}{length}\\ \hline
        initial vertex & final vertex & 3 & 4 & 5 \\ \hline
        0 & 1 & (0, 3, 4, 1) &  (0, 3, 4, 0, 1) &  (0, 3, 4, 5, 3, 1) \\ \hline
        ~ & 2 & (0, 1, 5, 2) & (0, 3, 1, 5, 2) & (0, 3, 1, 5, 3, 2) \\ \hline
        ~ & 3 &  (0, 1, 5, 3) & (0, 1, 5, 0, 3) & (0, 1, 5, 2, 0, 3) \\ \hline
        ~ & 4 &  (0, 3, 2, 4) &  (0, 1, 5, 3, 4) & (0, 3, 1, 5, 2, 4) \\ \hline
        ~ & 5 &  (0, 3, 1, 5) &  (0, 1, 2, 4, 5) &  (0, 1, 2, 0, 1, 5) \\ \hline
        1 & 0 & (1, 2, 4, 0) &  (1, 2, 4, 5, 0) &  (1, 2, 4, 1, 2, 0) \\ \hline
        ~ & 2 & (1, 5, 3, 2) &  (1, 5, 0, 3, 2) & (1, 5, 3, 1, 5, 2) \\ \hline
        ~ & 3 &  (1, 2, 0, 3) & (1, 2, 4, 0, 3) &  (1, 2, 0, 1, 5, 3) \\ \hline
        ~ & 4 & (1, 5, 2, 4) & (1, 2, 0, 3, 4) &  (1, 2, 0, 1, 2, 4) \\ \hline
        ~ & 5 &  (1, 2, 4, 5) & (1, 2, 0, 1, 5) &  (1, 2, 0, 3, 4, 5) \\ \hline
        2 & 0 &  (2, 4, 5, 0) & (2, 4, 1, 2, 0) & (2, 4, 1, 2, 4, 0) \\ \hline
        ~ & 1 & (2, 0, 3, 1) &  (2, 0, 3, 4, 1) &  (2, 0, 3, 2, 0, 1) \\ \hline
        ~ & 3 &  (2, 4, 0, 3) & (2, 0, 1, 5, 3) &  (2, 0, 1, 2, 0, 3) \\ \hline
        ~ & 4 &  (2, 0, 3, 4) & (2, 0, 3, 2, 4) & (2, 0, 1, 5, 2, 4) \\ \hline
        ~ & 5 &  (2, 0, 1, 5) &  (2, 0, 3, 1, 5) & (2, 0, 1, 2, 4, 5) \\ \hline
        3 & 0 & (3, 1, 5, 0) &  (3, 1, 2, 4, 0) &  (3, 1, 2, 4, 5, 0) \\ \hline
        ~ & 1 & (3, 2, 0, 1) & (3, 2, 4, 0, 1) & (3, 2, 0, 3, 4, 1) \\ \hline
        ~ & 2 & (3, 1, 5, 2) &  (3, 1, 5, 3, 2) & (3, 1, 5, 0, 1, 2) \\ \hline
        ~ & 4 & (3, 1, 2, 4) &  (3, 1, 5, 2, 4) &  (3, 1, 2, 0, 3, 4) \\ \hline
        ~ & 5 & (3, 2, 4, 5) &  (3, 1, 2, 4, 5) &  (3, 1, 2, 0, 1, 5) \\ \hline
        4 & 0 &  (4, 1, 5, 0) &  (4, 1, 2, 4, 0) & (4, 1, 2, 4, 5, 0) \\ \hline
        ~ & 1 & (4, 0, 3, 1) &  (4, 0, 3, 4, 1) & (4, 0, 3, 2, 0, 1) \\ \hline
        ~ & 2 &  (4, 0, 1, 2) & (4, 0, 1, 5, 2) &  (4, 1, 5, 0, 1, 2) \\ \hline
        ~ & 3 &  (4, 1, 5, 3) &  (4, 0, 1, 5, 3) &  (4, 0, 1, 2, 0, 3) \\ \hline
        ~ & 5 &  (4, 0, 1, 5) &  (4, 0, 3, 1, 5) &  (4, 0, 1, 2, 4, 5) \\ \hline
        5 & 0 & (5, 2, 4, 0) & (5, 2, 4, 5, 0) & (5, 2, 4, 1, 2, 0) \\ \hline
        ~ & 1 & (5, 2, 4, 1) & (5, 3, 2, 0, 1) &  (5, 3, 4, 5, 0, 1) \\ \hline
        ~ & 2 &  (5, 0, 1, 2) &  (5, 0, 1, 5, 2) &  (5, 0, 1, 5, 3, 2) \\ \hline
        ~ & 3 &  (5, 2, 0, 3) &  (5, 0, 1, 5, 3) &  (5, 0, 1, 2, 0, 3) \\ \hline
        ~ & 4 &  (5, 0, 3, 4) &  (5, 0, 1, 2, 4) &  (5, 0, 1, 5, 2, 4) \\ \hline
    \end{tabular}
    \caption{Walks of lengths 3, 4, and 5, between two vertices in $T_{48}$.}
    \label{table: T48 walks}
\end{table}

The tournament $T_{48}$ is depicted in \Cref{fig T48};
from now on, we will refer to $T_{48}$ simply as $T$.

\begin{figure}
\begin{center}
\includegraphics[scale=1]{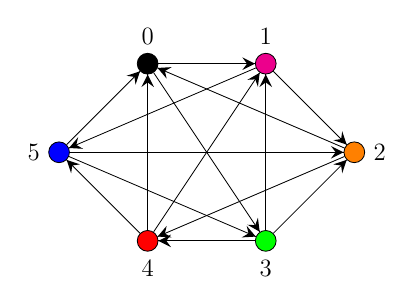}
\end{center}
\caption{Tournament $T_{48}$ with 6 vertices}
\label{fig T48}
\end{figure}

\begin{lemma}\label{lem: homomorphism Cn to T}
If $\overrightarrow{C}_n$ is a cycle with $n\geq 3$, then there is a  homomorphism of $\overrightarrow{C}_n$ to $T$.
\end{lemma}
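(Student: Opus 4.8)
The plan is to exploit the fact that a homomorphism $\phi\colon \overrightarrow{C}_n \to T$ is the same thing as a closed walk of length $n$ in $T$: listing the images of the vertices of $\overrightarrow{C}_n$ in cyclic order gives a cyclic sequence $v_0,v_1,\ldots,v_{n-1}$ of (not necessarily distinct) vertices of $T$ with $(v_i,v_{i+1})\in A(T)$ for all $i$ (indices modulo $n$), and conversely any such closed walk defines a homomorphism. So it suffices to show that $T$ has a closed walk of every length $n\geq 3$.

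First I would record, from the preceding Proposition applied with $i=j=0$, that $T$ contains closed walks $W_3,W_4,W_5$ based at vertex $0$, of lengths $3$, $4$, and $5$ respectively. Next I would note the elementary arithmetic fact that every integer $n\geq 3$ can be written as $n=3a+4b+5c$ with $a,b,c\geq 0$ integers; this is immediate by induction, with $n=3,4,5$ as base cases and the step $n\mapsto n-3$ for $n\geq 6$. Then, since $W_3,W_4,W_5$ all begin and end at vertex $0$, concatenating $a$ copies of $W_3$, $b$ copies of $W_4$, and $c$ copies of $W_5$ produces a closed walk $W$ of length $n$ in $T$, and reading off $W$ yields the desired homomorphism of $\overrightarrow{C}_n$ to $T$.

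The one point to be careful about is the common basepoint: the concatenation needs closed walks of lengths $3,4,5$ through a \emph{single} vertex, which is why one uses the Proposition in the case $i=j$ rather than only the between-distinct-pairs data tabulated above. If one prefers to avoid the $i=j$ case, an equivalent route is available: the Proposition forces $T$ to be strong, so by Moon's theorem \cite{Moon1966OnSO} $T$ is vertex-pancyclic, hence every vertex of $T$ lies on a $3$-cycle, a $4$-cycle, and a $5$-cycle. One then shows by induction on $n$ that through every vertex $v$ there is a closed walk of length $n$ in $T$: for $n\in\{3,4,5\}$ these are the short cycles through $v$, and for $n\geq 6$ one prepends a $3$-cycle at $v$ to a closed walk of length $n-3$ at $v$ supplied by the induction hypothesis. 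Either way the argument is routine; the lemma is essentially a bookkeeping consequence of the Proposition, so I do not expect a genuine obstacle here.
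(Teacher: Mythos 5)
Your proposal is correct and is essentially the paper's argument: the paper also invokes Moon's theorem to get a $k$-cycle through each vertex for $k\in\{3,4,5,6\}$, handles $n\le 6$ directly, and for $n\ge 7$ splits by $n\bmod 3$, concatenating copies of the $3$-cycle $(0,1,2,0)$ with one $4$-cycle or $5$-cycle at the common vertex $0$ --- i.e.\ exactly your decomposition $n=3a+4b+5c$ with $b+c\le 1$. Your phrasing via closed walks and the numerical semigroup generated by $3,4,5$ is just a cleaner packaging of the same idea, and your caution about needing the short cycles to share a basepoint (resolved via vertex-pancyclicity rather than the tabulated distinct-pair walks) matches what the paper actually does.
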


\begin{proof}
Suppose that $\overrightarrow{C}_n=(x_0,x_1,\ldots,x_{n-1},x_0)$.
Note that $T$ is a strong tournament, by Moon's theorem,
we have that every $i\in V(T)$ is contained in a $k$-cycle, for every $k\in \{3,4,5,6\}$, we denote that cycle by $\gamma^k_{i}$.

It is clear that if $n\in \{3,4,5,6\}$, then there is an homomorphism from $\overrightarrow{C}_n$ to $\gamma^n_1$. Assume that $n\geq 7$. We consider $3$ cases.

\emph{Case 1}. $n\equiv 0$ (mod $3$). We define $\phi:V(\overrightarrow{C}_n)\rightarrow V(T)$ such that $\phi(x_i)=r$, where $r\in \{0,1,2\}$ and $i\cong r$ (mod $3$). Since $(0,1,2,0)$ is a cycle of $T$, we have that if $(x_i,x_{i+1})\in A(\overrightarrow{C}_n)$, then $(\phi(x_i),\phi(x_{i+1}))=(r,r+1)\in A(T)$.

\emph{Case 2}. $n\equiv 1$ (mod $3$). We have that $n=3k+1$ define $\phi:V(\overrightarrow{C}_n)\rightarrow V(T)$ such that

$$\phi (x_i) =
\left \{ \begin{array}{ll}
 r & \text{ if } i\in \{0,\ldots,3k \}, r\in \{0,1,2\} \text{ and } i\equiv r (\text{ mod } 3)\\
4 & \text{ if }i=n.
\end{array}
\right .$$

 Since $(0,1,2,0)$ and $(0,1,2,4,0)$ are cycles of $T$, we have that if $(x_i,x_{i+1})\in A(\overrightarrow{C}_n)$, then $(\phi(x_i),\phi(x_{i+1}))=(r,r+1)\in A(T)$.
 
 \emph{Case 3}. $n\equiv 2$ (mod $3$). We have that $n=3k+2$ define $\phi:V(\overrightarrow{C}_n)\rightarrow V(T)$ such that

$$\phi (x_i) =
\left \{ \begin{array}{ll}
 r & \text{ if } i\in \{0,\ldots,3k \}, r\in \{0,1,2\} \text{ and } i\equiv r (\text{ mod } 3)\\
4 & \text{ if }i=3k+1\\
5 & \text{ if }i=3k+2.\\
\end{array}
\right .$$

 Since $(0,1,2,0)$ and $(0,1,2,4,5,0)$ are cycles of $T$, we have that if $(x_i,x_{i+1})\in A(\overrightarrow{C}_n)$, then $(\phi(x_i),\phi(x_{i+1}))=(r,r+1)\in A(T)$.

Therefore, there is a homomorphism from $\overrightarrow{C}_n$ to $T$.
\end{proof}

\begin{theorem}\label{thm: xo D is at most 6}
Let $D$ be a strong asymmetrical digraph. If $D\in \mathcal{LE}_3$, then $\chi_o(D)\leq 6$.
\end{theorem}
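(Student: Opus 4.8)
The plan is to show that every strong asymmetrical digraph $D\in\mathcal{LE}_3$ admits a homomorphism to the fixed tournament $T=T_{48}$, since $\chi_o(D)$ equals the minimum order of a tournament that $D$ maps to, and $|V(T)|=6$. I would argue by induction on the number of ears $k$ in an ear decomposition $(D_0,D_1,\ldots,D_k)$ of $D$ in which every ear has length at least $3$. The base case $k=0$ is exactly \Cref{lem: homomorphism Cn to T}: $D_0$ is a directed cycle $\overrightarrow{C}_n$ with $n\geq 3$, and the lemma gives a homomorphism $\phi_0\colon D_0\to T$. For the inductive step, assume $\phi\colon D_i\to T$ is a homomorphism and let $P_i=(x_0,x_1,\ldots,x_r)$ be the ear of $D_i$ in $D$ with $r=l(P_i)\geq 3$; I must extend $\phi$ to the internal vertices $x_1,\ldots,x_{r-1}$ consistently with $T$.

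The key point is that the internal vertices of $P_i$ form a directed path that must be mapped to a directed walk in $T$ from $\phi(x_0)$ to $\phi(x_r)$ of length exactly $r$. So the heart of the step is: for any two (not necessarily distinct) vertices $a,b\in V(T)$ and any integer $r\geq 3$, there is a walk in $T$ from $a$ to $b$ of length $r$. This is precisely what the Proposition and \Cref{table: T48 walks} provide for $r\in\{3,4,5\}$; for larger $r$ one extends a short walk by inserting copies of a closed walk. Concretely, since $T$ is a strong tournament, Moon's theorem gives a $3$-cycle through any chosen vertex, so any walk can be lengthened by $3$ at a time; combining this with the length-$3$, length-$4$, and length-$5$ walks from the table covers every residue class modulo $3$, hence every $r\geq 3$. (Here it is essential that the ears have length at least $3$ rather than $2$ — a length-$2$ ear would force a walk of length exactly $2$ between two prescribed vertices of $T$, which \Cref{table: T fails} shows need not exist; this is where tightness of the hypothesis, discussed later in the section, comes in.) Having chosen such a walk $(a=w_0,w_1,\ldots,w_r=b)$ in $T$, set $\phi(x_j)=w_j$ for $1\leq j\leq r-1$; since the ear is internally disjoint from $D_i$, this is well-defined, and every arc of $D_{i+1}=D_i\cup P_i$ is sent to an arc of $T$, so $\phi$ is a homomorphism $D_{i+1}\to T$. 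If instead the ear is a cycle (i.e. $x_0=x_r$), the same argument applies with a closed walk of length $r\geq 3$ through $\phi(x_0)$, which again exists by the cycle-insertion argument.

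The main obstacle is making the walk-existence claim for all $r\geq 3$ airtight: one needs that from the length-$3$, $4$, $5$ walks in \Cref{table: T48 walks} (together with a $3$-cycle through an appropriate intermediate vertex supplied by Moon's theorem) one can realize \emph{every} residue of $r$ modulo $3$ between \emph{every} ordered pair $a,b$, and also every closed walk length $\geq 3$ at every vertex. Since $T$ is strong and vertex-pancyclic, each vertex lies on a $3$-cycle, so given a walk of length $r_0\in\{3,4,5\}$ from $a$ to $b$ passing through some vertex $v$, the walk of length $r_0+3t$ obtained by looping $t$ times around the $3$-cycle at $v$ still runs from $a$ to $b$; as $\{3,4,5\}$ hits all residues mod $3$, every $r\geq 3$ is attained. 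After the induction terminates at $D_k=D$, we obtain a homomorphism $D\to T$, and therefore $\chi_o(D)\leq |V(T)|=6$.
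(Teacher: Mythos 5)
Your proposal is correct and takes essentially the same approach as the paper: induction on the ear decomposition, \Cref{lem: homomorphism Cn to T} for the base case, and extension of the homomorphism along each ear by mapping its internal vertices onto a walk in $T=T_{48}$ of the required length, obtained by splicing $3$-cycles (via Moon's theorem) into the length-$3$, $4$, $5$ walks of \Cref{table: T48 walks}. The paper merely unfolds your single walk-existence claim into an explicit case analysis on $r \bmod 3$ and on whether the ear is a cycle.
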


\begin{proof}

Note that $T$ is a strong tournament, by
Moon's theorem,  we have that every $i\in V(T)$ is contained in a $k$-cycle, for every $k\in \{3,4,5,6\}$, we denote that cycle by $\gamma^k_{i}$.
 If $i$ and $j$ are two different vertices of $V(T)$, then there is a $k$-path from $i$ to $j$ $k\in \{3,4,5\}$ (see \Cref{table: T48 walks}), we denote that path by $P^k_{ij}$.

Let $D$ be a digraph with an ear decomposition $(D_0,D_1,\ldots,D_k)$ such that every ear of $D_i$ has a length of at least three.
We will prove that there is a homomorphism from $D$ to $T$. We will proceed by induction over $k$. 
If $k=0$, then $D$ is a cycle, by \Cref{lem: homomorphism Cn to T} there is a homomorphism from $D$ to $T$.
 Suppose that if $D'$ is a strong digraph with an ear decomposition $(D'_0,D'_1,\ldots,D'_k)$ such that every ear of $D'_i$ has a length of at least three, then there is $\phi':V(D')\rightarrow V(T)$ a homomorphism from $D'$ to $T$.
Suppose that $D$ is a strong digraph with an ear decomposition $(D_0,D_1,\ldots,D_k,D_{k+1})$ such that every ear of $D_i$ has a length of at least three. Let $P_{k}=(x_0,x_1,\ldots,x_r)$ be the ear of $D_k$ in $D$.
 Note that $D'=D_k$ is a strong digraph with an ear decomposition $(D_0,D_1,\ldots,D_k)$ such that every ear of $D_i$ has length of at least three, by induction hypothesis, there is $\phi':V(D')\rightarrow V(T)$ an homomorphism from $D'$ to $T$. We define $\phi:V(D)\rightarrow V(T)$ such that $\phi(x)=\phi'(x)$ if $x\in V(D')$ and, depending the length of $P_k$ and if $P_k$ is a cycle or not, we consider the following cases.
 
\emph{Case 1}. $x_0=x_r$ and $r\equiv 0$ (mod 3). It follows that $r=3t$. Suppose that $\phi (x_0)=i$ with $i\in V(T)$, and $\gamma^3_i=(i=i_0,i_1,i_2,i_0)$ a 3-cycle in $T$. 
For every $j\in \{ 0,1,\ldots,r-1\}$, we define $\phi(x_j)=i_s$, where $j\cong s$ (mod $3$).
Since $\gamma^3_i$ is a cycle of $T$, we have that if $(x_i,x_{i+1})\in A(P_k)$, then $(\phi(x_i),\phi(x_{i+1}))=(i_s,i_{s+1})\in A(T)$.

\emph{Case 2}. $x_0\neq x_r$ and $r\equiv 0$ (mod 3). It follows that $r=3t$. Suppose that $\phi (x_0)=i$ and $\phi(x_r)=j$ with $i,j\in V(T)$. Consider $\gamma^3_i=(i=i_0,i_1,i_2,i_0)$ a 3-cycle in $T$, and $P^3_{ij}=(i=j_0,j_1,j_2,j_3=j)$ a 3-path from $i$ to $j$ in $T$.

If $i=j$, then, for every $m\in \{ 0,1,\ldots,3t\}$, we define $\phi(x_m)=i_s$, where $m\cong s$ (mod $3$). Otherwise, $i\neq j$, for every $m\in \{ 0,1,\ldots,3(t-1)\}$, we define $\phi(x_m)=i_s$, where $m\cong s$ (mod $3$), and for every $m\in \{ 3t-2,3t-1,3t\}$, $\phi(x_m)=j_s$, where $m\cong s$ (mod $3$).

Since $\gamma^3_i$ is a cycle and $P^3_{ij}$ is a path from $i$ to $j$ in $T$, we have that if $(x_i,x_{i+1})\in A(P_k)$, then $(\phi(x_i),\phi(x_{i+1}))=(i_s,i_{s+1})\in A(T)$.

\emph{Case 3}. $x_0=x_r$ and $r\equiv 1$ (mod 3). It follows that $r=3t+1$. Suppose that $\phi (x_0)=i$ with $i\in V(T)$. Consider $\gamma^3_i=(i=i_0,i_1,i_2,i_0)$ and $\gamma^4_i=(i=j_0,j_1,j_2,j_3,j_0)$ a 3-cycle and a 4-cycle, respectively, in $T$. 

For every $m\in \{ 0,1,\ldots,3(t-1)\}$, we define $\phi(x_m)=i_s$, where $m\cong s$ (mod $3$), and for every $m\in \{ 3t-2,3t-1,3t,3t+1\}$, $\phi(x_m)=j_s$, where $m\cong s$ (mod $4$).

Since $\gamma^3_i$ and $\gamma^4_i$ are two cycles of $T$, that start in $i$, we have that if $(x_i,x_{i+1})\in A(P_k)$, then $(\phi(x_i),\phi(x_{i+1}))=(i_s,i_{s+1})\in A(T)$.

\emph{Case 4}. $x_0\neq x_r$ and $r\equiv 1$ (mod 3). It follows that $r=3t+1$. Suppose that $\phi (x_0)=i$ and $\phi(x_r)=j$ with $i,j\in V(T)$. 
Consider $\gamma^3_i=(i=i_0,i_1,i_2,i_0)$ a 3-cycle in $T$, $\gamma^4_i=(i=q_0,q_1,q_2,q_3,q_0)$ a 4-cycle in $T$ and $P^4_{ij}=(i=j_0,j_1,j_2,j_3,j_4=j)$ a 4-path from $i$ to $j$ in $T$.

For every $m\in \{ 0,1,\ldots,3(t-1)\}$, we define $\phi(x_m)=i_s$, where $m\cong s$ (mod $3$).
If $i=j$, then for every $m\in \{ 3t-2,3t-1,3t,3t+1\}$, $\phi(x_m)=q_s$ where $m\cong s$ (mod $4$). Otherwise, $i\neq j$, for every $m\in \{ 3t-2,3t-1,3t,3t+1\}$, $\phi(x_m)=j_s$, where $m\cong s$ (mod $4$).

Since $\gamma^3_i$ and $\gamma^4_i$ are two cycles of $T$, that start in $i$, and $P^4_{ij}$ is a $4$-path from $i$ to $j$ in $T$, we have that if $(x_i,x_{i+1})\in A(P_k)$, then $(\phi(x_i),\phi(x_{i+1}))=(i_s,i_{s+1})\in A(T)$.
 
 \emph{Case 5}. $x_0=x_r$ and $r\equiv 2$ (mod 3). It follows that $r=3t+2$. Suppose that $\phi (x_0)=i$ with $i\in V(T)$. Consider $\gamma^3_i=(i=i_0,i_1,i_2,i_0)$ and $\gamma^5_i=(i=j_0,j_1,j_2,j_3,j_4,j_0)$ a 3-cycle and a 5-cycle, respectively, in $T$. 

For every $m\in \{ 0,1,\ldots,3(t-1)\}$, we define $\phi(x_m)=i_s$, where $m\cong s$ (mod $3$), and for every $m\in \{ 3t-2,3t-1,3t,3t+1,3t+2\}$, $\phi(x_m)=j_s$, where $m\cong s$ (mod $5$).

Since $\gamma^3_i$ and $\gamma^5_i$ are two cycles of $T$, that start in $i$, we have that if $(x_i,x_{i+1})\in A(P_k)$, then $(\phi(x_i),\phi(x_{i+1}))=(i_s,i_{s+1})\in A(T)$.

\emph{Case 6}. $x_0\neq x_r$ and $r\equiv 2$ (mod 3). It follows that $r=3t+2$. Suppose that $\phi (x_0)=i$ and $\phi(x_r)=j$ with $i,j\in V(T)$. 
Consider $\gamma^3_i=(i=i_0,i_1,i_2,i_0)$ a 3-cycle in $T$, $\gamma^5_i=(i=q_0,q_1,q_2,q_3,q_4,q_0)$ a 5-cycle in $T$ and $P^5_{ij}=(i=j_0,j_1,j_2,j_3,j_4,j_5=j)$ a 5-path from $i$ to $j$ in $T$.

For every $m\in \{ 0,1,\ldots,3(t-1)\}$, we define $\phi(x_m)=i_s$, where $m\cong s$ (mod $3$).
If $i=j$, then for every $m\in \{ 3t-2,3t-1,3t,3t+1,3t+2\}$, $\phi(x_m)=q_s$ where $m\cong s$ (mod $5$). Otherwise, $i\neq j$, for every $m\in \{ 3t-2,3t-1,3t,3t+1,3t+2\}$, $\phi(x_m)=j_s$, where $m\cong s$ (mod $5$).

Since $\gamma^3_i$ and $\gamma^5_i$ are two cycles of $T$, that starts in $i$, and $P^5_{ij}$ is a $5$-path from $i$ to $j$ in $T$, we have that if $(x_i,x_{i+1})\in A(P_k)$, then $(\phi(x_i),\phi(x_{i+1}))=(i_s,i_{s+1})\in A(T)$.

Hence, $\phi$ is a homomorphism from $D$ to $T$. Therefore, $\chi_o(D)\leq 6$.

\end{proof}

\begin{figure}
\begin{center}
\includegraphics[scale=1]{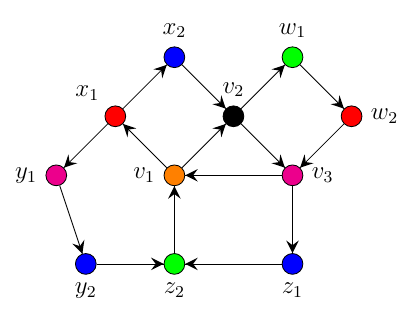}
\end{center}
\caption{Digraph $D$ with an ear decomposition such that every ear has a length 3, and $\chi_o(D)=6$}
\label{fig ears long 3}
\end{figure}

Consider the digraph $D$ in \Cref{fig ears long 3}, and the following subdigraphs of $D$; $D_0=D[\{v_1,v_2,v_3\}]$, $D_1=D[\{v_1,v_2,v_3,x_1,x_2\}]$, $D_2=D[\{v_1,v_2,v_3,x_1,x_2,w_1,w_2\}]$, $D_3=D[\{v_1,v_2,v_3,x_1,x_2,w_1,w_2,z_1,z_2\}]$, and  $D_4=D$. It follows that, \linebreak $(D_0,D_1,D_2,D_3,D_4)$ is an ear decomposition where every ear has length 3.
By \Cref{thm: xo D is at most 6}, we have that $\chi_o(D)\leq 6$, on the other hand, it is not difficult to prove that $\chi_o(D)>5$. Hence, $\chi_o(D)= 6$. We can conclude that the bound in \Cref{thm: xo D is at most 6} is tight.

Let $G_1$ be a digraph such that $G_1\cong \overrightarrow{C}_3$.
For every $i\geq 2$, we construct the digraph $G_i$, obtained from $G_{i-1}$ as follows:
\begin{enumerate}
\item For every pair of distinct vertices $u$ and $v$, add a new vertex $x_{uv}$.
\item Add the arcs $(u,x_{uv})$ and $(x_{uv},v)$.
\end{enumerate}

Observe that every $G_i$ has an ear decomposition, such that the first subdigraph is $G_1$, and every ear has length 2. On the other hand, consider the digraph $G_i$ and $\phi$ an oriented coloring of $G_i$ with $\chi_o(G_i)$ colors. Let $u,v\in V(G_{i-1})$, it follows that there is a vertex $x_{uv}$ of $G_i$ such that $(u,x_{uv},v)$ is a path in $G_i$. It implies that $\phi(u)\neq \phi(v)$. Hence, $|V(G_{i-1})|\leq \chi_o(G_i)$. Therefore, we can conclude that the oriented chromatic number of the family  $\{ G_i\}^{\infty}_{i=0}$ is not bounded.
 Since $\{ G_i\}^{\infty}_{i=0}\subseteq \mathcal{LE}_2$, we conclude that the oriented chromatic number of $\mathcal{LE}_2$ is not bounded.
Even more, by construction, $|A(G_i)|=2|V(G_i)|-3$, which implies that in general, the number of arcs does not need to be dense in digraphs, so that the oriented chromatic number of some digraph families is not bounded.

\begin{figure}
\begin{center}
\includegraphics[width=\textwidth]{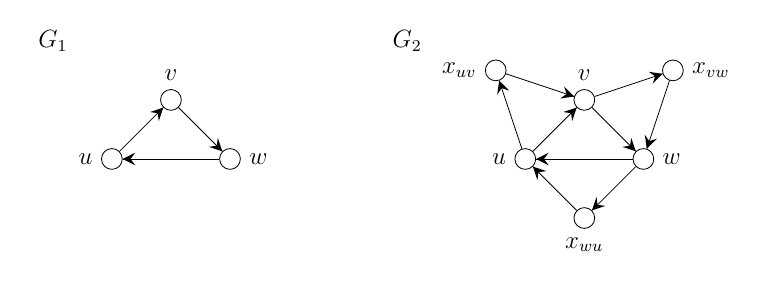}
\end{center}
\caption{Digraphs with $G_1$ and $G_2$}
\label{fig: G1 and G22}
\end{figure}

To conclude,
we propose the study of different theoretical problems, as well as their applications, which are often difficult for digraphs in general, but in these families, an approach to the solution of the problem could be provided.
In particular, we believe that different types of coloring could be addressed in this family, such as acyclic colorings, star colorings, and distance 2-colorings.

\bibliographystyle{plain}
\bibliography{biblio}

\end{document}